\documentclass{birkjour_t2}
 \newtheorem{thm}{Theorem}[section]
 
 \newtheorem{lem}[thm]{Lemma}
 
 \theoremstyle{definition}
 \newtheorem{defn}[thm]{Definition}
 \theoremstyle{remark}

 \numberwithin{equation}{section}

\begin{document}

%
%
%
%
%
%
%
%
%

\title[Lower bounds for A-singularities]{Some lower bounds for the maximal number of A-singularities in algebraic surfaces. II}

\author[J.G.Escudero]{Juan Garc\'{\i}a Escudero}
\address{%
Madrid, Spain
\\
https://orcid.org/0000-0002-7422-7908
}
\email{jjgemplubg@gmail.com}

\keywords{singularities, algebraic surfaces}

\subjclass{14B05, 14J17, 14J70}

\date{April,  2024}
\dedicatory{}

\begin{abstract}
Algebraic surfaces in the complex  projective space  with a large number of $A$-type singularities have been presented in a recent paper. We extend the construction in order to obtain new lower bounds for the maximal number of  $A$ singularities in degree $d$ complex surfaces.

\end{abstract}
\maketitle

\section{Introduction}
\bigskip\par

A classical problem in algebraic geometry consists of determining the possible number of singularities of a given type for surfaces of degree $d$ in the complex  projective space  $\mathbb{P}^{3}({\Bbb{C}})$. Upper bounds for the maximal number of $A_{\nu}$-type singularities on degree $d$ complex algebraic surfaces have been given in \cite{var83, miy84}. Lower bounds for degree $d$ have been obtained in \cite{chm92, esc13} for $\nu = 1$ ( nodes ) and  in \cite{lab06, esc14b, esc26}  for $\nu > 1$. For a number of low degree surfaces certain explicit constructions give better lower bounds for nodes \cite{ cay69, kum64, tog40, bar96, end97, sar01, lab06b} and also for $\nu > 1$ \cite{lab06b, bor13, bon25}. The exact value of the upper bounds is known only for a few cases \cite{bea82, jaf97}.
 \par
Complex algebraic surfaces of degrees $d=3q$  with a large number of type $A$ singularities have been constructed in a recent paper  \cite{esc26}. We extend the construction to obtain new lower bounds for the maximal number of $A_{\nu}$ singularities for $\nu > 2$.
\par
The affine equations of the surfaces studied in \cite{esc26} consist of a sum of a bivariate polynomial ${\mathcal{J}}$ with three critical values and a univariate polynomial ${\mathcal{B}}$ with two critical values.  The surfaces constructed in this paper have similar equations with new polynomials ${\mathcal{G}}$ replacing ${\mathcal{B}}$. The polynomials ${\mathcal{G}}$ also have two critical values and can be described in terms of planar trees which are obtained by a series of transformations applied to the trees associated to ${\mathcal{B}}$ and others. In this way the results about cusps and other $A_{\nu}$ singularities obtained in \cite{esc14a, esc14b} can be seen as special cases of this constructions. 

 \section{The families of polynomials with two critical values}
 \bigskip\par
 A univariate polynomial with no more than two different critical values, also called a Belyi polynomial, can be represented by a plane tree with a bicoloring for the vertices. The polynomial critical points have the multiplicities given by the number of edges adjacent to the vertices minus one \cite{adr98, lab06}. The degree of a vertex is the number of edges incident to it. A leaf vertex is a vertex with degree one. Black and white vertices represent critical points with critical value $\zeta=-1$ and $\zeta=1$ respectively, and their multiplicities are given by the degrees of the vertices minus one. We denote by $N_{\zeta}(\mathcal{P}, \rho)$ the number of critical points of a polynomial $\mathcal{P}$ with critical value $\zeta$ and multiplicity $\rho$.
 \par
 The polynomials ${\mathcal{B}}^{(t)}_{d,\nu,\epsilon}(w)$, $t=1, 2, 3$, obtained in \cite{esc26}, Lemmas 1 (Eq. 5), 2 (Eq. 8) and 3 (Eq. 9) are
 \begin{equation}  
{\mathcal{B}}^{(1)}_{3(n+3m(n+m)), 3(n+m)-1}(w)
\end{equation}
\par\noindent
with $n\in {\Bbb{Z}}^{\geq 0},  m\in {\Bbb{Z}}^{+}$,
\begin{equation} 
{\mathcal{B}}^{(2)}_{3(m+lj+(n+l)(3l+j+1))+j(j+2), 3(n+l)+j, 3m+j-1}(w)
\end{equation} 
\par\noindent
with $j\in\{0,1\}, n, m\in {\Bbb{Z}}^{+}$ if $j=0$, $n,m \in {\Bbb{Z}}^{\geq 0}$ if $j=1, l=m, m+1, m+2, \dots $, and 
\begin{equation} 
{\mathcal{B}}^{(3)}_{3(m(3(m+n)+x-1)+j(n+2m+\lfloor \frac{x}{3}\rfloor)+l+1), 3(n+m)+j+x-1, 3l+2\lfloor 1+(\lfloor\frac{x-1}{2}\rfloor-\frac{1}{2})j\rfloor+j\lfloor \frac{1}{x}\rfloor}(w)  
\end{equation} 
\par\noindent
with $j\in\{-1,0,1\}, x\in\{1,2,3\},  m\in {\Bbb{Z}}^{+}, n\in {\Bbb{Z}}^{\geq 0}, 3m+j \geq 4$; $l=0$  if $m=1$ and $l=0,1,\dots ,m-2$ if $m\geq 2$. 
 \bigskip\par
As discussed in \cite{esc26}, if we compare the trees associated with the polynomials with given values of $d,\nu,\epsilon$ we can see that some polynomials in Lemma 3 coincide with some in Lemmas 1 and 2. With the notation ${\mathcal{B}}^{(1)}[n, m]$, ${\mathcal{B}}^{(2)}[j, n, m, l]$ and ${\mathcal{B}}^{(3)}[x, p, n, m, l]$ ($p=3m+j$) we showed: 
 \bigskip\par\noindent
1) ${\mathcal{B}}^{(1)}[n+1, m]={\mathcal{B}}^{(3)}[2, 3m+1,n,m,0]$ with $n\in {\Bbb{Z}}^{\geq 0}$ and $m\in {\Bbb{Z}}^{+}$. 
 \bigskip\par\noindent
 2) ${\mathcal{B}}^{(2)}[0, n+1, l+1, m]={\mathcal{B}}^{(3)}[3, 3m+1,n,m,l]$ for $n\in {\Bbb{Z}}^{\geq 0}$,  $m\in {\Bbb{Z}}^{\geq 1}$, $l=0$ if $m=1$, $l=0,1, \dots m-2$ if $m \geq 2$.
\bigskip\par\noindent
 3) ${\mathcal{B}}^{(2)}[1, n+1, l, m-1]={\mathcal{B}}^{(3)}[3, 3m-1, n, m, l]$ with $n\in {\Bbb{Z}}^{\geq 0}$, $m\in {\Bbb{Z}}^{\geq 2}$, $l=0,1, \dots m-2$. 
 \bigskip\par
 We use the trees of ${\mathcal{B}}_{d_{0},\nu,\epsilon}^{(t)}$ as starting trees of a series of tree sequences. 
We denote the corresponding polynomials by ${\mathcal{G}}^{(t)}_{d,\nu,\epsilon}(w), t=1,2, 3$ with ${\mathcal{G}}^{(t)}_{d_{0},\nu,\epsilon}(w)={\mathcal{B}}_{d_{0},\nu,\epsilon}^{(t)}(w)$. The label $\epsilon$, used for $t=2,3$ in ${\mathcal{G}}^{(t)}$ when it is not zero, refers only to the initial tree.
\par
     \begin{defn} 
     The triple $(d,N_{-1}(\mathcal{G}, \nu), N_{1}(\mathcal{G}, \nu))$ is said to satisfy condition $(E)$  if $ \Big \lfloor \frac{d}{\nu+1} \Big \rfloor=N_{-1}(\mathcal{G}, \nu)$ and   $\Big \lfloor \frac{d-1}{\nu}   \Big \rfloor-    \Big \lfloor \frac{d}{\nu+1}   \Big \rfloor=N_{1}(\mathcal{G}, \nu)$.
  
      \end{defn} 
 The polynomials ${\mathcal{B}}^{(t)}_{d,\nu,\epsilon}(w)$, $t=1, 2, 3$, satisfy condition $(E)$ \cite{esc26}.  In order to describe the generation of ${\mathcal{G}}^{(t)}$, we employ a formal language $\mathcal{L}$ consisting of an alphabet $\Sigma=\{\alpha, \beta , \dots  \} $ and a set of admissible words. We denote the tree by $\mathcal{T}_{\nu}$ if $\nu$ is the largest value of the multiplicity of the critical points with critical value $-1$. We consider the sequence of transformations 
   \bigskip\par
       \begin{equation} 
\mathcal{T}_{\nu,0}  { w_{1} \atop \longrightarrow } \mathcal{T}_{\nu,1}  { w_{2} \atop \longrightarrow } \mathcal{T}_{\nu,2} { w_{3} \atop \longrightarrow } \dots { w_{n} \atop \longrightarrow } \mathcal{T}_{\nu,n}  
    \end{equation} 
    where the initial tree $\mathcal{T}_{\nu,0}$ is the tree of ${\mathcal{B}}_{d_{0},\nu,\epsilon}^{(t)}$ and we associate the word $w=w_{1}w_{2} \dots w_{j}$ to  $\mathcal{T}_{\nu,j}$. 
\par\noindent
 
     \begin{defn}  A word $w=w_{1}w_{2} \dots w_{n}, w_{j} \in \Sigma, j=1, 2, \dots n$ with length $|w|=n$ is said to be $E$-admissible if all the trees $\mathcal{T}_{\nu,j}$ obtained in Eq. (2.4) give polynomials $\mathcal{G}$ with  $(d, N_{-1}(\mathcal{G}, \nu), N_{1}(\mathcal{G}, \nu))$ satisfying condition $(E)$.  The language $\mathcal{L}_{E}$  is the set of $E$-admissible words $w$.
       \end{defn} 
       
                \begin{figure}[h]
 \includegraphics[width=16pc]{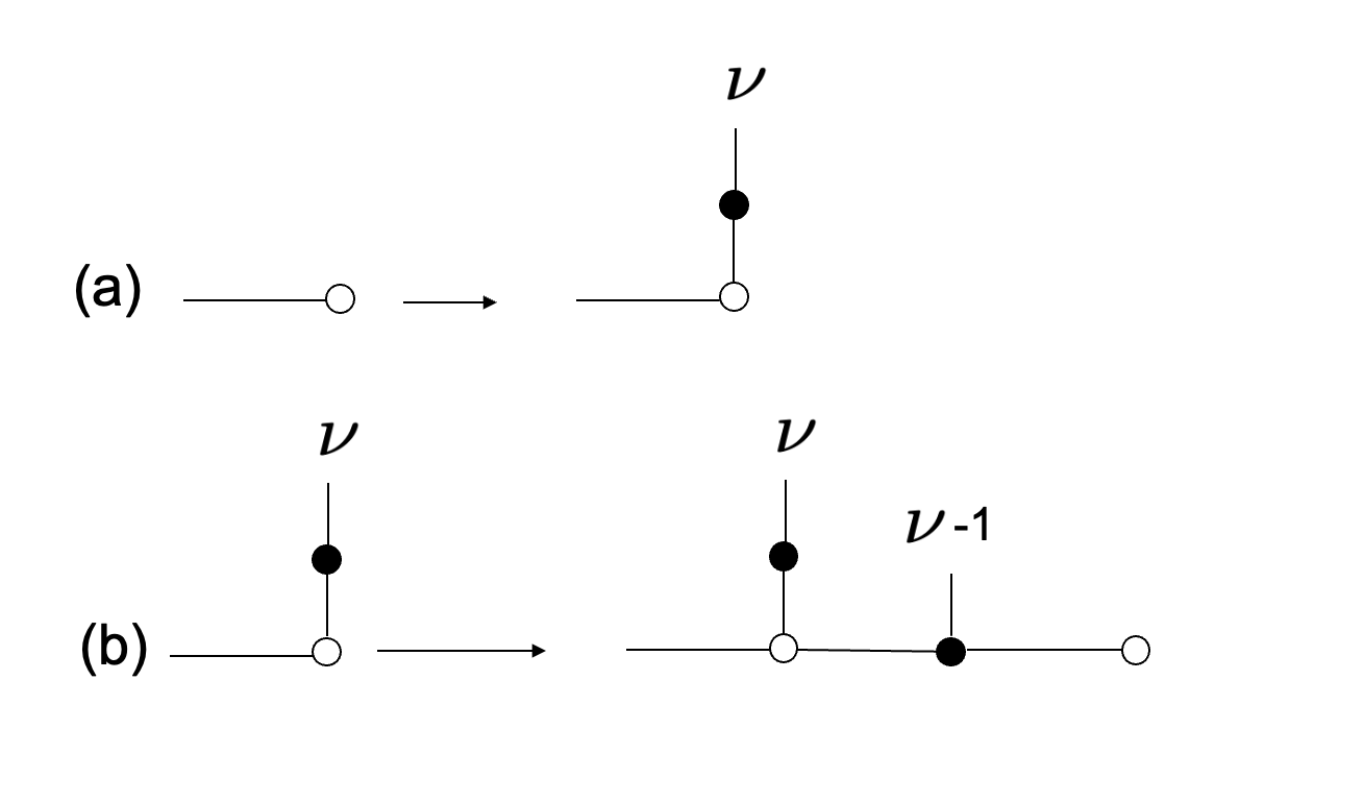}
\caption{\label{label} The transformations or substitution rules for the derivation of ${\mathcal{G}}^{(t)}, t=1,3$: (a) $\alpha$, where we only show one edge with a number, in this case represented by $\nu$, indicating the number of additional adjacent edges, b) $\beta$.}
\end{figure}  
\par
          \begin{figure}[h]
 \includegraphics[width=20pc]{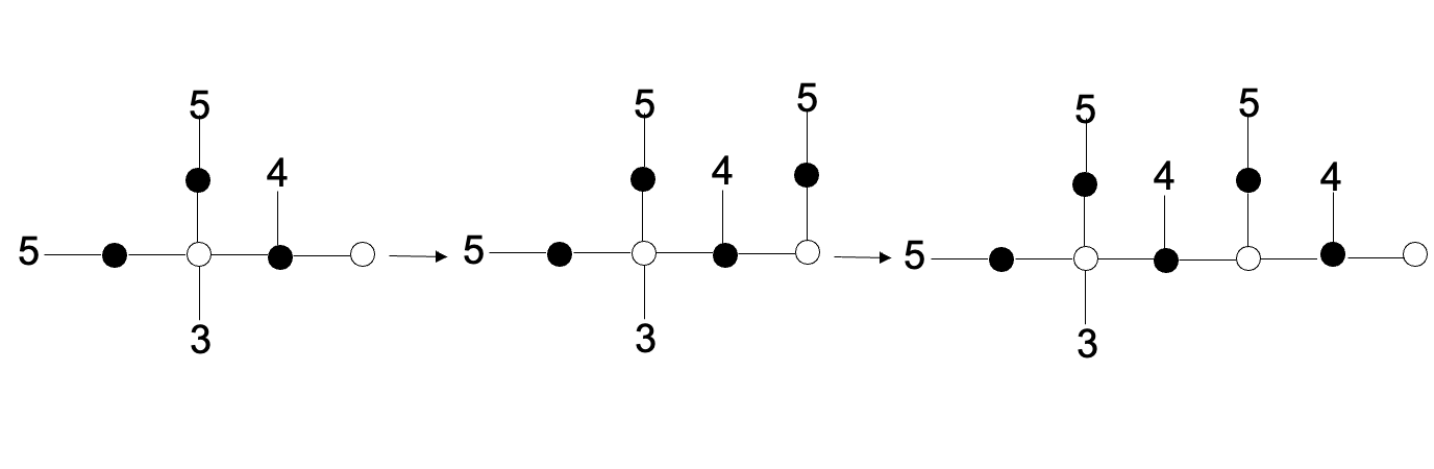}
\caption{\label{label} Applying  the substitution rules to the tree corresponding to ${\mathcal{G}}^{(1)}_{d_{0}, 5}(w)={\mathcal{B}}^{(1)}_{21, 5}(w)$. In the first step we apply $\alpha$ which produces the tree of ${\mathcal{G}}^{(1)}_{27,5}(w)$, and in the second we apply $\beta$ to get the ${\mathcal{G}}^{(1)}_{33,5}(w)$ tree.}
\end{figure}  
 
\par
    
We consider several types of alphabets and word sequences. Some words may represent trees which correspond to the same polynomial.  Although the values of the degrees $d$ depend on $d_{0}$,  in the notation $d_{0}$ is suppressed.   
 \par  
For the cases ${\mathcal{B}}^{(t)}, t=1,3$, the alphabet is  $\Sigma=\{\alpha, \beta\}$, where the transformations $\alpha$ and $\beta$  are depicted in Fig.1. The substitution rule $\alpha$ adds to a leaf white vertex one black vertex with $\nu$ adjacent white vertices, and $\beta$ is a similar transformation applied to a non leaf white vertex:
\par\noindent   
$\alpha: \mathcal{T}_{\nu}(d, N_{-1}, N_{1}) \longrightarrow \mathcal{T}_{\nu}(d+\nu+1, N_{-1}+1, N_{1})$, 
\par\noindent
$\beta: \mathcal{T}_{\nu}(d, N_{-1},N_{1})  \longrightarrow \mathcal{T}_{\nu}(d+\nu+1, N_{-1}+1, N_{1}+\Big \lfloor \frac{2}{\nu} \Big \rfloor )$. 
\par
   In Fig.2 the transformations $\alpha$ and $\beta$ are applied succesively to the tree associated to  ${\mathcal{B}}^{(1)}_{21, 5}(w)$ and we get 
   
         \begin{equation} 
{\mathcal{G}}^{(1)}_{21, 5}(w) ={\mathcal{B}}^{(1)}_{21, 5}(w)  { \alpha \atop \longrightarrow } {\mathcal{G}}^{(1)}_{27,5}(w)   { \beta \atop \longrightarrow } {\mathcal{G}}^{(1)}_{33,5}(w)
    \end{equation} 
    \par
    \begin{lem}     There exist polynomials  ${\mathcal{G}}^{(t)}_{d,\nu,\epsilon}(w), t=1, 3$ whose trees are obtained from initial trees of type ${\mathcal{B}}_{d,\nu,\epsilon}^{(t)}, t=1,3$ respectively, by means of a series of transformations represented by the alphabet $\Sigma=\{\alpha, \beta\}$, such that the associated words belong to a language $\mathcal{L}_{E}$.  When $\nu=2$ the language is infinite. The values of $\nu$  and  $d$ are
      \bigskip\par\noindent    
   (a) $t=1$ ($n\in {\Bbb{Z}}^{\geq 0},  m\in {\Bbb{Z}}^{\geq 1}$): 
   
   (1) $\nu=2$, $d=9+3h$, $h\in {\Bbb{Z}}^{\geq 0}$, for $n=0, m=1$.
   
   (2) $\nu=3(n+m)-1$, $d=3(n+3m(n+m))+h(\nu+1)$, $h=0, \dots 3(n+m)-2$, for $n\in {\Bbb{Z}}^{\geq 1},  m\in {\Bbb{Z}}^{\geq 1}$. 

\par\noindent  
   (b) $t=3$ ($n\in {\Bbb{Z}}^{\geq 0}$): 
   
   (1) $\nu=3(n+m)-1$ , $d=3(3m(n+m)-n-2m+(n+m)h+l+1)$, $h=0, \dots 3(n-l+m-1)$, $m\in {\Bbb{Z}}^{\geq 2}$, $l=0, \dots m-2$.
   
     (2) $\nu=3(n+m)+2$ , $d=3(n(3m+1)+3m(m+1)+h(n+m+1)+l+1)$, $h=0, \dots 3(n-l+m)+1$, $m\in {\Bbb{Z}}^{\geq 1}$, $l=0$ if $m=1$ and $l=0, \dots m-2$ if $m\in {\Bbb{Z}}^{\geq 2}$.
   
       (3) $\nu=3(n+m)+2$ , $d=3(3nm+m(3m+2)+h(n+m+1)+l+1)$, $h=0, \dots 3(n-l+m)-1$, $m\in {\Bbb{Z}}^{\geq 2}$, $l=0, \dots m-2$.
     
     \end{lem}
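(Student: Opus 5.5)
The proof splits into two parts: existence of the polynomials, and checking condition $(E)$ along the words.

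\emph{Existence.} Every tree $\mathcal{T}_{\nu,j}$ produced from a plane bicolored tree by $\alpha$ or $\beta$ is again a plane bicolored tree. By the correspondence recalled at the start of this section (Belyi polynomials $\leftrightarrow$ plane trees, cf. \cite{adr98, lab06}), each such tree determines, up to affine change of variable, a polynomial with critical values only $\pm 1$. The degree of the polynomial is the number of edges. The multiplicities of its critical points are the vertex degrees minus one. So existence is automatic, and everything reduces to bookkeeping of $(d,N_{-1}(\mathcal{G},\nu),N_{1}(\mathcal{G},\nu))$ under the rules stated before the lemma:
\[
\alpha:(d,N_{-1},N_1)\mapsto(d+\nu+1,N_{-1}+1,N_1),
\qquad
\beta:(d,N_{-1},N_1)\mapsto\Bigl(d+\nu+1,N_{-1}+1,N_1+\Bigl\lfloor \tfrac{2}{\nu}\Bigr\rfloor\Bigr).
\]
I would check these directly. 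A new black vertex of degree $\nu+1$ is created. Its $\nu$ new white neighbours are leaves, so they contribute nothing of multiplicity $\nu$. The old white vertex gains one edge. For $\alpha$ that vertex is a leaf, so its multiplicity goes from $0$ to $1$, which counts only if $\nu=1$, excluded here. For $\beta$ a white vertex of degree $2$ becomes degree $3$, which counts exactly when $\nu=2$. I would also verify that $\alpha$ is applicable at every stage: the initial trees of ${\mathcal{B}}^{(1)},{\mathcal{B}}^{(3)}$ have white leaves, and each $\alpha$ creates $\nu$ new ones.

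\emph{Condition $(E)$.} Start from $(d_0,N_{-1}^0,N_1^0)$, which satisfies $(E)$ by \cite{esc26}. After $h$ letters we have
\[
d=d_0+h(\nu+1),\qquad N_{-1}=N_{-1}^0+h,\qquad N_{1}=N_1^0+(\#\beta)\Bigl\lfloor\tfrac{2}{\nu}\Bigr\rfloor .
\]
The first condition, $\lfloor d/(\nu+1)\rfloor=N_{-1}$, is preserved automatically. The second condition becomes
\[
\Bigl\lfloor \tfrac{d_0-1+h(\nu+1)}{\nu}\Bigr\rfloor-\Bigl\lfloor\tfrac{d_0-1}{\nu}\Bigr\rfloor-h=\Bigl\lfloor \tfrac{d_0-1+h}{\nu}\Bigr\rfloor-\Bigl\lfloor\tfrac{d_0-1}{\nu}\Bigr\rfloor .
\]
\textbf{Case $\nu\ge3$.} Here $N_1$ is constant, so we need $\lfloor (d_0-1+h)/\nu\rfloor=\lfloor (d_0-1)/\nu\rfloor$. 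Writing $d_0-1=q\nu+r$ with $0\le r\le\nu-1$, this holds exactly for $0\le h\le \nu-1-r$. The maximal admissible word length is therefore $\nu-1-r$, and every word in $\{\alpha,\beta\}^{\le \nu-1-r}$ is $E$-admissible.

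For each family I would compute $r=(d_0-1)\bmod \nu$ from the formulas (2.1) and (2.3). For (a)(2), $\nu=3(n+m)-1$ and $d_0=3n+9m(n+m)=3n+3m(\nu+1)$, so $d_0-1\equiv 3n+3m-1\equiv 0 \pmod{\nu}$. This gives $r=0$ and $h\le\nu-1=3(n+m)-2$, as claimed. For the three subcases of (b) I would specialise $j$ and $x$ in (2.3), compute $d_0$ and $r$, and expect $\nu-1-r$ to come out as $3(n-l+m-1)$, $3(n-l+m)+1$, $3(n-l+m)-1$ respectively.

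\textbf{Case $\nu=2$.} This is (a)(1), with $d_0=9$. Then $\lfloor 2/\nu\rfloor=1$, and the requirement becomes $\#\beta=\lfloor (8+h)/2\rfloor-4=\lfloor h/2\rfloor$ for every prefix of the word. Words alternating $\alpha\beta\alpha\beta\cdots$ satisfy this at every prefix, so the language is infinite and all $d=9+3h$ occur. The applicability of $\beta$ (existence of a white vertex of degree $2$) must be checked along the way; after each $\alpha$ the attached white vertex has degree $2$, so this holds.

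The main obstacle I expect is the case-by-case arithmetic in (b). This means extracting $d_0$ from the floor-laden expression (2.3) for each admissible $(j,x)$ and verifying that the residue $r$ yields exactly the stated ranges of $h$, including the restrictions on $l$ and $m$.
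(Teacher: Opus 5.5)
Your proposal is correct and follows the paper's route. Both track $(d,N_{-1},N_1)$ along words in $\alpha,\beta$ (the paper uses the alternating words $\alpha\beta\alpha\cdots$), observe that $N_{-1}$ keeps pace with $\lfloor d/(\nu+1)\rfloor$, and find how long $\lfloor (d-1)/\nu\rfloor-\lfloor d/(\nu+1)\rfloor$ stays equal to $N_1$; your reduction to $h\le \nu-1-r$ with $r\equiv d_0-1 \pmod{\nu}$ makes explicit the ranges the paper only asserts. The arithmetic you deferred in (b) comes out as you expect: for $x=1,2,3$ one gets $d_0=(3m-1)\nu+3l+2$, $(3m+1)\nu+3l+1$, $3m\nu+3l+3$, hence $r=3l+1,\,3l,\,3l+2$ and exactly the bounds $3(n-l+m-1)$, $3(n-l+m)+1$, $3(n-l+m)-1$.
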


 \begin{proof}
      \bigskip\par
     We apply alternatively the two transformations $\alpha$, $\beta$ and get trees with corresponding words $\alpha\beta\alpha \dots$. When we apply the transformations $h=0,1,2, \dots$ times, namely  $|\alpha\beta\alpha \dots|=h$, to a tree linked to ${\mathcal{B}}^{(t)}_{d_{0},\nu,\epsilon}(w)$ the resulting polynomials have degrees  $d=d_{0}+h(\nu+1)$.  We are interested in polynomials having degrees multiple by 3. This occurs if $\nu+1=3r$, which is possible in the following cases: (1) ${\mathcal{B}}^{(1)}_{3(n+3m(n+m)), 3(n+m)-1}(w)$; (2) ${\mathcal{B}}^{(3)}[x=1, p=3m-1, n, m, l]$, ${\mathcal{B}}^{(3)}[x=2, p=3m+1, n, m, l]$ and  ${\mathcal{B}}^{(3)}[x=3, p=3m, n, m, l]$.
\par  
 Admissible words for $\nu=2$ are $\alpha, \alpha\beta, \alpha\beta\alpha,  (\alpha\beta)^2, \dots $ and the language $\mathcal{L}_{E}$  is infinite.
For $\nu>2$ admissible words  can also be obtained by alternating $\alpha$ and $\beta$: $\alpha, \alpha\beta, \alpha\beta\alpha,  (\alpha\beta)^2, \dots  , w$, where 
 $w=(\alpha\beta)^{y/2}$ if  $y$ is even and  $w=(\alpha\beta)^{\lfloor y/2 \rfloor }\alpha$ if $y$ is odd. The value of $y$ is, as we see below,  $y=3(n+m)-2$ when the initial tree is ${\mathcal{B}}^{(1)}[n, m]$ and $y= 3(n-l)+p-x-3\lfloor \frac{1}{x}\rfloor+2$, $x=1, 2, 3$, for ${\mathcal{B}}^{(3)}[x, p, n, m, l]$.  
\bigskip\par\noindent
   (1) $t=1$. For  $n\in {\Bbb{Z}}^{\geq 0},  m\in {\Bbb{Z}}^{+}, d_{0}=3(n+3m(n+m))$ the polynomial ${\mathcal{B}}^{(1)}_{d_{0},\nu}(w)$, has $3m$ critical points of multiplicity $\nu=3(n+m)-1$ with critical value $\zeta=-1$ and one critical point of multiplicity $\nu$ with critical value $\zeta=1$. After applying the transformations $h=0,1,2, \dots y$ times, the resulting ${\mathcal{G}}^{(1)}_{d,\nu}(w)$  has $3m+h$ critical points of multiplicity $\nu$ with critical value $\zeta=-1$ and one critical point of multiplicity $\nu$ with critical value $\zeta=1$. It also has  $\lfloor \frac{h}{2} \rfloor$  critical points with critical value $\zeta=1$ of multiplicity $\nu=2$ and, when $h$ is odd, one additional critical point with critical value $\zeta=1$ of multiplicity $\nu=1$. Hence  the polynomials ${\mathcal{G}}^{(1)}_{d,\nu}(w)$, $\nu>2$ have 
\bigskip\par
$d=d_{0}+h(\nu+1)=3(n+3m(n+m))+h(\nu+1)$, $\nu=3(n+m)-1$, 
\bigskip\par\noindent
and $N_{-1}=3m+h, N_{1}=1$, therefore  $\lfloor \frac{d}{\nu+1}\rfloor=N_{-1}$ and  $\lfloor \frac{d-1}{\nu}\rfloor=N_{-1}+1$ only if $h \le 3(n+m)-2$. 
\par
The case ${\mathcal{G}}^{(1)}_{d,\nu}(w)$ with $\nu=2$ occurs when $n=0$ and $m=1$. It has $d=9+3h, N_{-1}(\mathcal{G}, 2)=3+h$ and $N_{1}(\mathcal{G}, 2)=1+\lfloor\frac{h}{2}\rfloor$, but $\lfloor \frac{d}{\nu+1}\rfloor=N_{-1}$ and $\lfloor \frac{d-1}{\nu}\rfloor-\lfloor \frac{d}{\nu+1}\rfloor=1+\lfloor\frac{3h}{2}\rfloor-h=N_{1}$, which are valid for any $h\in {\Bbb{Z}}^{\geq 0}$, hence the language is infinite.

\bigskip\par\noindent
   (2) $t=3$. Here $N_{1}=1$ in all the cases. When ${\mathcal{G}}^{(3)}_{d_{0},\nu,\epsilon}(w)={\mathcal{B}}^{(3)}[x=1, p=3m-1, n, m, l]$, $\nu=3(n+m)-1$, the successive application of the transformations  gives polynomials with $d=d_{0}+h(\nu+1)=3(3m(n+m)-n-2m+(n+m)h+l+1)$, $N_{-1}=p-1+h=3m-2+h$. Condition $(E)$ is satisfied  if $h \le 3(n+m-l-1)=3(n-l)+p-2$. 
\par
If ${\mathcal{G}}^{(3)}_{d_{0},\nu,\epsilon}(w)={\mathcal{B}}^{(3)}[x=2, p=3m+1, n, m, l]$, $\nu=3(n+m)+2$, then the polynomials have 
$d=d_{0}+h(\nu+1)=3(n(3m+1)+3m(m+1)+h(n+m+1)+l+1)$, $N_{-1}=p-1+h=3m+h$, and condition $(E)$ is fulfilled when $h \le 3(n+m-l)+1=3(n-l)+p$. 
\par
For ${\mathcal{G}}^{(3)}_{d_{0},\nu,\epsilon}(w)={\mathcal{B}}^{(3)}[x=3, p=3m, n, m, l]$, $\nu=3(n+m)+2$, the polynomials have  $d=d_{0}+h(\nu+1)=3(3nm+m(3m+2)+h(n+m+1)+l+1)$, $N_{-1}=p-1+h=3m-1+h$,
hence condition $(E)$  is satisfied  only if $h \le 3(n+m-l)-1=3(n-l)+p-1$. 

      \end{proof}
                 \begin{figure}[h]
 \includegraphics[width=20pc]{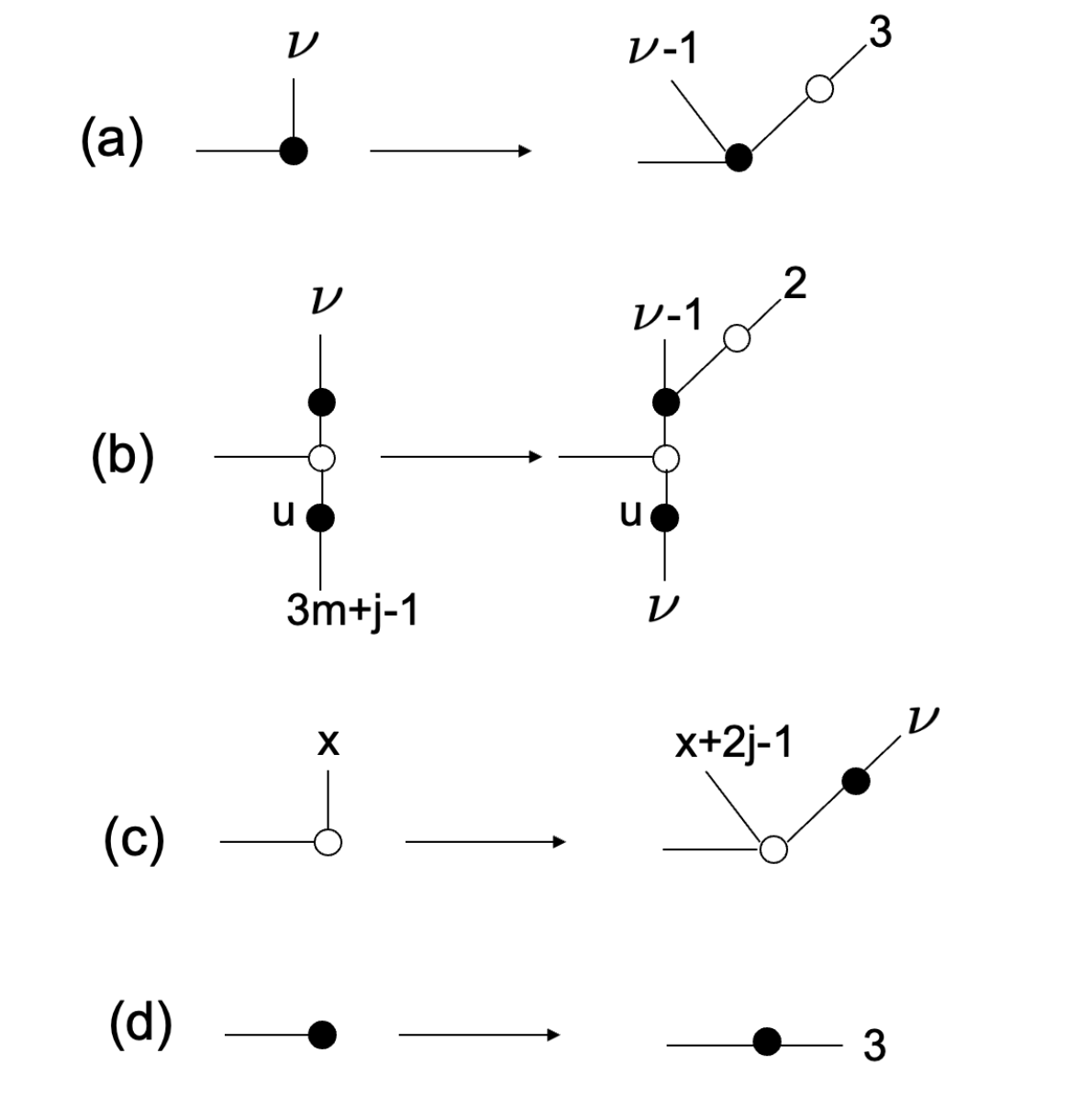}
\caption{\label{label} The 5 types of transformations for ${\mathcal{G}}^{(2)}$: (a) $\alpha$, (b) $\beta$ , (c) $\gamma$, (d) $\delta$, (e) $\bar{\delta}$ is as $\delta$ but applied to a white vertex.}
\end{figure}  
 \bigskip\par

          \begin{figure}[h]
 \includegraphics[width=30pc]{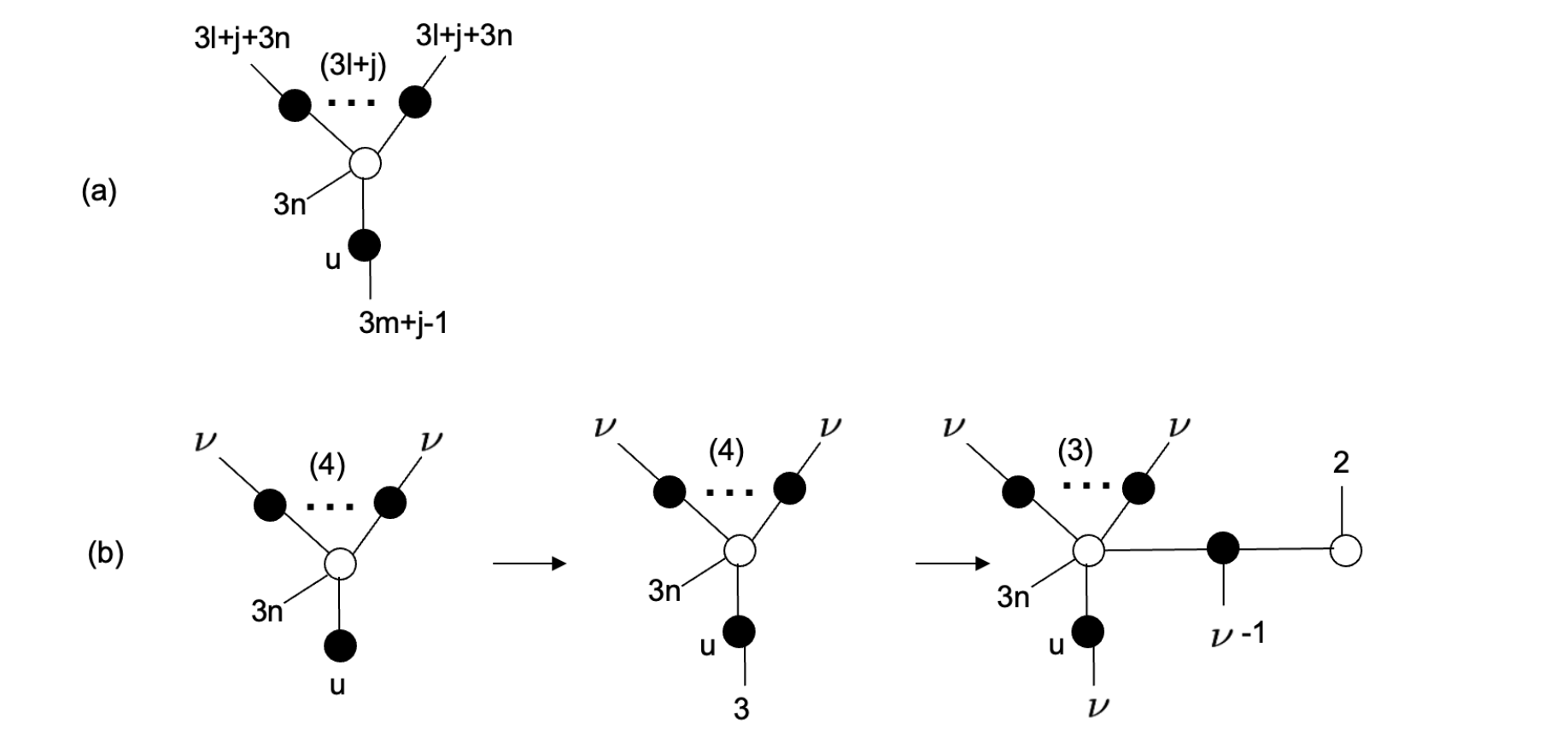}
\caption{\label{label} (a) Tree for  ${\mathcal{B}}^{(2)}_{d,  \nu,\epsilon}(w)$, where the number of edges with label $3l+j+3n$ is indicated in brackets above the suspension points. (b) Applying  the transformations $\delta, \beta$ to the tree with $j=1,m=0, l=1$ corresponding to ${\mathcal{B}}^{(2)}_{d,  \nu,\epsilon}(w), d=15n+21, \nu= 3n+4,\epsilon= 0$ to obtain ${\mathcal{G}}^{(2)}$.}
\end{figure}

          \begin{figure}[h]
 \includegraphics[width=33pc]{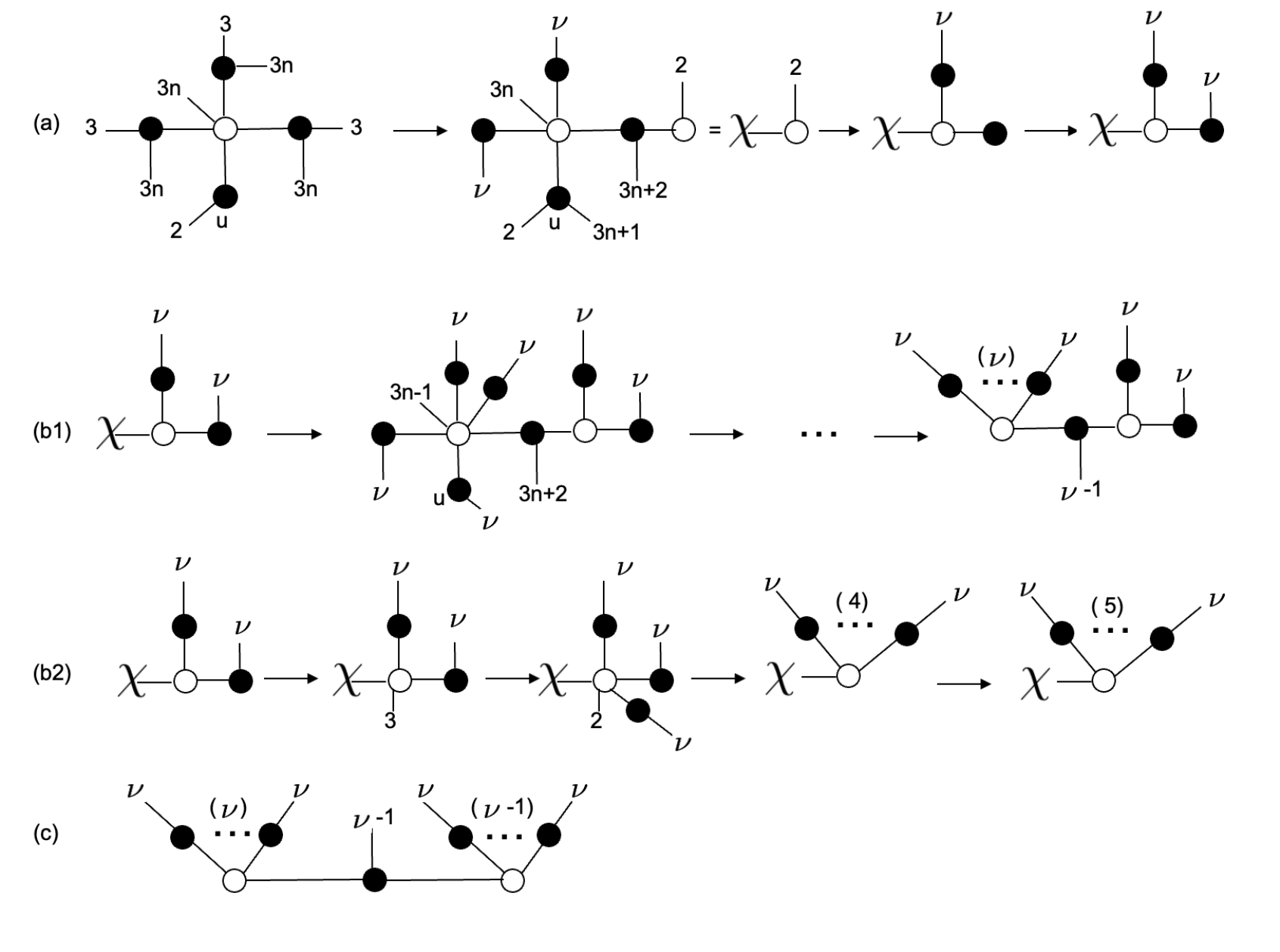}
\caption{\label{label} Applying  the transformations to the tree corresponding to ${\mathcal{B}}^{(2)}_{d,  \nu,\epsilon}(w), j=0, m=1, d=12n+5, \nu= 3n+3,\epsilon= 2 $, to obtain ${\mathcal{G}}^{(2)}$. (a) The first three steps. The process can be continued in two ways: (b1) and (b2). The last tree is represented in (c).}
\end{figure}   
\par 
For the derivation of ${\mathcal{G}}^{(2)}_{d,\nu, \epsilon}(w)$, we characterise a tree corresponding to $A_{\nu}$ by $\mathcal{T}_{\nu}(d, N_{-1})=\mathcal{T}_{\nu}(d, N_{-1}, 1)$, because $N_{1}=1$ in all cases. The initial tree is $\mathcal{T}_{\nu,0}=\mathcal{T}_{\nu}(d_{0}, 3l+j), d_{0}=(3l+j+1)\nu+3m+j, \nu=3n+3l+j$ (see Lemma 2 in \cite{esc26} and Fig. 4(a)).
\par
We consider 5 types of transformations (Fig.3):
\par\noindent   
$\alpha: \mathcal{T}_{\nu}(d, N_{-1}) \longrightarrow \mathcal{T}_{\nu}(d+3, N_{-1})$, 
\par\noindent
$\beta: \mathcal{T}_{\nu}(d, N_{-1}) \longrightarrow \mathcal{T}_{\nu}(d+3(n+l-m+1), N_{-1}+1)$, 
\par\noindent
$\gamma: \mathcal{T}_{\nu}(d, N_{-1}) \longrightarrow \mathcal{T}_{\nu}(d+\nu, N_{-1}+1)$, 
\par\noindent
$\delta: \mathcal{T}_{\nu}(d, N_{-1}) \longrightarrow \mathcal{T}_{\nu}(d+3, N_{-1})$,
\par\noindent
$\bar{\delta}: \mathcal{T}_{\nu}(d, N_{-1}) \longrightarrow \mathcal{T}_{\nu}(d+3, N_{-1})$.

      \bigskip\par    
      
           \begin{lem}  There exist polynomials  ${\mathcal{G}}^{(2)}_{d,\nu,\epsilon}(w)$ whose trees are obtained from initial trees of type ${\mathcal{B}}_{d,\nu,\epsilon}^{(2)}$ by means of a series of transformations such that the associated words belong to a language $\mathcal{L}_{E}$ with alphabet $\Sigma=\{\alpha, \beta, \gamma, \delta, \bar{\delta} \}$. The possible series include
        \bigskip\par\noindent             
           (a) $j=0$ 
           
           (1) $l=m=1, n\in {\Bbb{Z}}^{\geq 1}$, $\nu=3n+3$, $d=\nu(3r+k+4)+3(r+1)$, $r=1, \dots n$, $k=0, \dots \nu$.
           
           (2) $l=m+s, s \in {\Bbb{Z}}^{\geq 0}, n, m \in {\Bbb{Z}}^{\geq 1}$, $\nu=3(n+m)$,   $d_{0}=3m(\nu+1)+\nu$,
           
             $d \in \{d_{0}+3k(n+1)\}_{k=0,1}\cup\{d_{0}+3(n+1)+3q\}_{q=1,\dots m+s-1}\cup\{d_{0}+3(n+1)+3(m+s-1)+r\nu\}_{r=1, \dots \nu-1}\cup\{d_{0}+3(n+1)+3(m+s-1)+\nu(\nu-1)+3f\}_{f=1, \dots n}\cup\{d_{0}+3(n+1)+3(m+s-1)+\nu(\nu-1)+3n+g\nu\}_{g=1, \dots 3n}$.

          \par\noindent    
          (b) $j=1$ 
          
          (1) $m=0, l=1$, $\nu=3n+4$, $d=3(n(5+\lfloor\frac{k}{2}\rfloor)+k+7)$, $k=0,1, 2$.
          
           (2) $l=m, n \in {\Bbb{Z}}^{\geq 1}$, $d_{0}=3((n+m)(3m+2)+2m+1)$,
           
           $d \in \{d_{0}+3k(n+1)\}_{k=0,1}\cup\{d_{0}+3(n+1)+9q\}_{q=1,\dots m-1}\cup\{d_{0}+3(n+1)+9(m-1)+r\nu\}_{r=1,2}$.
           
            (3) $l=m+s, s \in {\Bbb{Z}}^{\geq 1}, n \in {\Bbb{Z}}^{\geq 0}$, $d_{0}=3((n+m+s)(3(m+s)+2)+2m+s+1)$, 
           
           $d \in \{d_{0}+3k(n+1)\}_{k=0,1}\cup\{d_{0}+3(n+1)+9q\}_{q=1,\dots m+s-1}\cup\{d_{0}+3(n+1)+9(m+s-1)+r\nu\}_{r=1,2}$.
          
     \end{lem}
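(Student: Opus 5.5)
The plan is to argue exactly as for the preceding Lemma: start from the tree $\mathcal{T}_{\nu,0}=\mathcal{T}_{\nu}(d_{0},3l+j)$ of ${\mathcal{B}}^{(2)}_{d_0,\nu,\epsilon}$, with $d_{0}=(3l+j+1)\nu+3m+j$ and $\nu=3n+3l+j$ (Lemma 2 in \cite{esc26}, Fig. 4(a)). We then apply explicit words in $\Sigma=\{\alpha,\beta,\gamma,\delta,\bar{\delta}\}$ and verify condition $(E)$ after every letter. Along the whole sequence $N_{1}=1$, and $N_{-1}$ increases only under $\beta$ and $\gamma$. So for a tree $\mathcal{T}_{\nu}(d,N_{-1})$ condition $(E)$ reduces to the two checks
\[
\Big\lfloor \frac{d}{\nu+1}\Big\rfloor=N_{-1},\qquad \Big\lfloor \frac{d-1}{\nu}\Big\rfloor=N_{-1}+1 .
\]
Equivalently, writing $d=N_{-1}(\nu+1)+\rho$, we need $0\le \rho\le \nu$ and $N_{-1}\nu+\nu+1\le d\le N_{-1}\nu+2\nu$. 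Each step of the proof therefore amounts to tracking the residue $\rho$ and confirming that it stays in the window $[\max(0,\nu+1-N_{-1}),\,\min(\nu,\nu-N_{-1}+\nu)]$.

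For case (a)(2), with $j=0$, $l=m+s$, $\nu=3(n+m)$, the word I would use is
\[
\beta\,\delta^{m+s-1}\gamma^{\nu-1}\alpha^{n}\gamma^{3n}.
\]
An initial $k=0,1$ gives $d_0$ and $d_0+3(n+1)$, where $3(n+l-m+1)$ equals $3(n+1)$ only when $s=0$; for $s>0$ the paper's choice of $\beta$ shifts by $3(n+1)$, which I would check against Fig. 3. The word is fed to the two checks above. Under $\delta$ or $\alpha$ the residue grows by $3$ while $N_{-1}$ is fixed. Under $\gamma$ we have $d\mapsto d+\nu$ and $N_{-1}\mapsto N_{-1}+1$, so $\rho$ drops by $1$. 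The bounds $q\le m+s-1$, $r\le\nu-1$, $f\le n$, $g\le 3n$ are then exactly the points where $\rho$ would leave the window. I would verify this by computing $\rho$ at the start and end of each block and using monotonicity of $\rho$ inside the block.

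Case (a)(1) is treated the same way with the word $(\beta\ldots)$ producing $d=\nu(3r+k+4)+3(r+1)$. Cases (b)(2),(3) use the analogous word with $\delta$-type steps of size $9$ (three letters $\alpha,\delta,\bar\delta$ grouped) followed by at most two $\gamma$'s. The small case (b)(1) is the explicit computation $\delta,\beta$ of Fig. 4(b), which I would verify directly from the $(d,N_{-1})$ values.

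The main obstacle is not the arithmetic but justifying that each transformation is applicable to the current tree and really produces the stated $(d,N_{-1})$. For example, $\delta$ requires a suitable black vertex and $\bar\delta$ a white one, and the tree after the $\gamma$ blocks must still have a white leaf to attach to. It must also be checked that no new critical point of multiplicity $\nu$ and value $1$ is created, so that $N_1=1$ persists. I would handle this by describing, as in Fig. 5, the shape of the tree after each block and noting which vertices remain available.
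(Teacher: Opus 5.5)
Your framework is the paper's: start from the tree of $\mathcal{B}^{(2)}$, write explicit words, and check $(E)$ letter by letter with $N_1=1$ throughout. Your residue window $\max(0,\nu+1-N_{-1})\le\rho\le\min(\nu,2\nu-N_{-1})$ is correct and makes the paper's implicit bookkeeping transparent.

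\begin{itemize}
\item For (a)(2) with $s=0$ it reproduces the bounds exactly: $\rho=\nu$ at the start, $3n+2$ after $\beta$, then $\nu-1$, $0$, $3n$, $0$ at the ends of the four blocks.
\item The paper's word there is $\beta\alpha^{m+s-1}\gamma^{\nu-1}\alpha^{n}\gamma^{3n}$. Your $\delta$ has the same effect on $(d,N_{-1})$, but it would need its own applicability argument.
\item For (a)(1) you give no word. The paper's device is $\beta\gamma^{2}\bar\delta(\gamma^{3}\bar\delta)^{r}\gamma^{k}$, $0\le r\le n-1$, $0\le k\le\nu$. Adding three black vertices to the white vertex of lower multiplicity after every third $\gamma$ returns $\rho$ to $\nu$ while $N_{-1}$ grows by $3$. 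Since $N_{-1}=6+3r$ at the end of the $r$-th block, the bound $r\le n-1$ is exactly $\nu\le 2\nu-N_{-1}$. This is what yields the degrees listed in (a)(1).
\end{itemize}

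The genuine gap is that in the remaining cases your check fails once it is actually carried out, so it cannot be left as routine.

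\begin{itemize}
\item In (b)(2), after $\beta$ you have $N_{-1}=3m+2$ and $\rho=3n+2$, with upper bound $\nu=3n+3m+1$. Then $q$ steps of size $9$ at fixed $N_{-1}$ need $9q\le 3m-1$, which is false at $q=m-1$ for every $m\ge 2$. For example, $n=1$, $m=2$ gives $\nu=10$ and $(87,7)\mapsto(93,8)\mapsto(102,8)$, but $\lfloor 102/11\rfloor=9$. The paper's words $\beta\alpha^{3q}$ have the same defect under the stated effect of $\alpha$, so they cannot be imported.
\item In (b)(3) no word can work. For $(n,m,s)=(0,1,1)$ one has $\nu=7$, $d_0=60$, and the list ends with $79$ and $86$. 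There $\lfloor\frac{d-1}{7}\rfloor-\lfloor\frac{d}{8}\rfloor=2$, so $(E)$ forces $N_1=2$, contradicting your standing assumption $N_1=1$.
\item In (a)(2) with $s\ge 1$, the discrepancy you spotted is real and is not settled by a figure. The actual initial tree has $\nu=3(n+l)$, $d_0=(3l+1)\nu+3m$ and $\rho_0=\nu-3s$.
  \begin{itemize}
  \item A $\beta$-shift of $3(n+1)$ breaks $(E)$ immediately: for $n=m=s=1$, $(66,6)\mapsto(72,7)$, but $\lfloor 71/9\rfloor=7$.
  \item The stated shift $3(n+l-m+1)=3(n+s+1)$ works. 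However, it reaches the listed degrees only after replacing $m$ by $l$ in the formulas for $\nu$ and $d_0$ and changing the $k=0$ entry.
  \item The shift $3(n+l-m+1)$ also contradicts the sequence $15n+21\to 15n+24\to 18n+27$ of (b)(1): it would give $18n+30$, where $\rho=\nu+1$.
  \end{itemize}
\end{itemize}

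So you must first settle the increment of $\beta$ and correct the degree lists. Then you need words in which $N_{-1}$ rises where required. Only after that does the remaining task become the tree-level applicability argument, which you defer just as the paper defers it to its figures.
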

     
      \begin{proof}
 \bigskip\par 
(a) $j=0$. Before treating the general cases we study the series with the initial tree of ${\mathcal{B}}^{(2)}_{d_{0}, \nu, \epsilon}(w), l=1, m=1, d_{0}=12n+15, \nu=3n+3, \epsilon=2$ (Fig. 3(a), right in \cite{esc26}). The first steps are represented in Fig. 5. We first add $\nu-2$ white vertices to the vertex $u$ and 2 additional  to one of the leaf white vertex as in Fig. 5(a) which is the transformation $\beta$. Then we apply 2 times the transformation $\gamma$: the words are $\beta\gamma^q$, $q=1,2$ with corresponding trees $\mathcal{T}_{\nu}(d_{0}+(q+1)\nu, 4+q)$. 
\par
We can apply two types of transformations to the tree corresponding to $\beta\gamma^2$. In Fig. 5(b1) we apply $\nu-3$ times the substitution rule $\gamma$  and we obtain $\beta\gamma^2\gamma^r$, $r=1,2 \dots \nu-3$. The trees are $\mathcal{T}_{\nu}(d_{0}+k \nu, 3+k)$, $k=4, 5, \dots , \nu$. The case $k=4$ is represented in Fig. 5(b1). The last tree of the whole series, represented in Fig. 5(c), has $d=2\nu(\nu+1)$ therefore we have to check if $d_{0}+ \nu^2<2\nu(\nu+1)$ in order to follow the steps represented in Fig. 5(b2). We then add 3 black vertices to the white vertex corresponding to the critical value with lower multiplicity as in Fig. 5(b2). The word is  $\beta\gamma^2\bar{\delta}$ with tree $\mathcal{T}_{\nu}(d_{0}+3 \nu+3, 6)$ and, after applying three times $\gamma$, we continue as above in Fig. 5(b1). The sequence of allowed words we get is 
       \bigskip\par    
$\beta$,  $\beta\gamma^{q}, (q=1,2, \dots \nu-1)$, $\beta\gamma^{2}\bar{\delta}$, $\beta\gamma^{2}\bar{\delta}(\gamma^{3}\bar{\delta})^{r}\gamma^{s}, r=0, 1, \dots  n-1, s=0,1, \dots \nu$
       \bigskip\par\noindent  
and the series of trees is $\mathcal{T}_{\nu}(d_{0}+(3r+k) \nu+3r, 3+3r+k), r= 1, 2, \dots , n, k=0, 1, \dots,  \nu$. 
\par
For the cases ${\mathcal{B}}^{(2)}[j, n, m, l]$ with $j=0, l=m$ the initial tree corresponds to ${\mathcal{G}}^{(2)}_{d_{0},\nu,\epsilon}(w)={\mathcal{B}}^{(2)}_{d_{0},\nu,\epsilon}(w)$ with $d_{0}=3m(\nu+1) +\nu, \nu=3n+3m, \epsilon=3m-1$. Allowed words are
       \bigskip\par     
        $\{\beta\}$ $\cup$ $\{\beta\alpha^q\}_{q=1, 2, \dots m-1}$ $\cup$ $\{\beta\alpha^{m-1}\gamma^r\}_{r=1, 2, \dots , 3n, 3n+1, \dots  3n+3m-1}$ $\cup$  $\{\beta\alpha^{m-1}\gamma^{\nu-1}\alpha^f\}_{f=1, 2, \dots  n}$  $\cup$  $\{\beta\alpha^{m-1}\gamma^{\nu-1}\alpha^n\gamma^g\}_{g=1, 2, \dots  3n}$ $\subset \mathcal{L}_{E}$
\par\noindent      
 where  q=0 if $m=1$. 
\par
For $l=m+s, s \in {\Bbb{Z}}^{\geq 1}$  permitted words  are: 
       \bigskip\par     
        $\{\beta\}$ $\cup$ $\{\beta\alpha^q\}_{q=1, 2, \dots m+s-1}$ $\cup$ $\{\beta\alpha^{m+s-1}\gamma^r\}_{r=1, 2, \dots , 3n, 3n+1, \dots  \nu-1}$ $\cup$  $\{\beta\alpha^{m+s-1}\gamma^{\nu-1}\alpha^f\}_{f=1, 2, \dots  n}$  $\cup$  $\{\beta\alpha^{m+s-1}\gamma^{\nu-1}\alpha^n\gamma^g\}_{g=1, 2, \dots  3n}$ $\subset \mathcal{L}_{E}$. 
\par
The associated degrees are   
$$d \in \{d_{0}+3k(n+1)\}_{k=0,1}\cup\{d_{0}+3(n+1)+3q\}_{q=1,\dots m+s-1}\cup\{d_{0}+3(n+1)+3(m+s-1)+r\nu\}_{r=1, \dots \nu-1}$$
$$\cup\{d_{0}+3(n+1)+3(m+s-1)+\nu(\nu-1)+3f\}_{f=1, \dots n}\cup\{d_{0}+3(n+1)+3(m+s-1)+\nu(\nu-1)+3n+g\nu\}_{g=1, \dots 3n}$$.
\par
Other sequences of transformations are possible. For instance if $l=m=1, n=2$, we have  
       \bigskip\par          
  $\{\beta\}$ $\cup$  $\{\beta\gamma\}$ $\cup$  $\{\beta\gamma^2\}$ $\cup$ $\{A\gamma^p\}_{p=1, 2, ... 6}$ $\cup$   $\{A\alpha\gamma^q\}_{q=0, 1, 2, 3}$ $\cup$  $\{B\alpha\gamma^r\}_{r=0, 1, 2, 3}$ $\cup$  $\{C\gamma\}$ $\cup$  $\{C\gamma^2\}$ $\cup$  $\{C\alpha\gamma^s \}_{s=0, 1, 2, \dots 9}$ $\subset \mathcal{L}_{E}$
\par\noindent    
  where $A=\beta\gamma^2, B=A\gamma^6, C=A\alpha\gamma^3$. As mentioned above different words may be linked with the same polynomial, like $B\alpha$ and $C\gamma^3$ for $d=123$ or $B\alpha^2, C\gamma^3\alpha, C\alpha\gamma^3$ for $d=126$.          
     \bigskip\par
(b) $j=1$. The case ${\mathcal{G}}^{(2)}_{d_{0}, \nu}(w)={\mathcal{B}}^{(2)}_{d_{0}, \nu, 0}(w)$, $m=0, l=1$, $d_{0}=15n+21, \nu=3n+4,  N_{-1}(\mathcal{G}, \nu)=4$ (Fig. 3(b), right in \cite{esc26}) is represented in Fig. 4(b) and a possible sequence of transformations is
         \begin{equation} 
{\mathcal{G}}^{(2)}_{15n+21, 3n+4}(w){ \delta \atop \longrightarrow } {\mathcal{G}}^{(2)}_{15n+24,3n+4}(w)   { \beta \atop \longrightarrow } {\mathcal{G}}^{(2)}_{18n+27,3n+4}(w)
    \end{equation} 
\par\noindent
with associated words $\delta$ and $\delta\beta$. 
\par
When the initial tree corresponds to $d_{0}= 3(m+l+(n+l)(3l+2)+1)$ with $l=m$, then allowed words are  
        \bigskip\par
$ \beta, \beta\alpha^{3q}$ ($q=1,2, \dots ,m-1$),  $ \beta\alpha^{3m-3}\gamma, \beta\alpha^{3m-3}\gamma^2$ ($n \geq 1$). 
        \bigskip\par\noindent
And when $l=m+s, s \in {\Bbb{Z}}^{\geq 1}$ we have  
     \bigskip\par
$ \beta, \beta\alpha^{3q}$ ($q=1,2, \dots ,m+s-1$),  $ \beta\alpha^{3m+3s-3}\gamma, \beta\alpha^{3m+3s-3}\gamma^2$ ($n \geq 0$)
\par\noindent
with degrees  
$$d \in \{d_{0}+3k(n+1)\}_{k=0,1}\cup\{d_{0}+3(n+1)+9q\}_{q=1,\dots m+s-1}\cup\{d_{0}+3(n+1)+9(m+s-1)+r\nu\}_{r=1,2}$$.
 \end{proof} 
 
           \begin{figure}[h]
 \includegraphics[width=13pc]{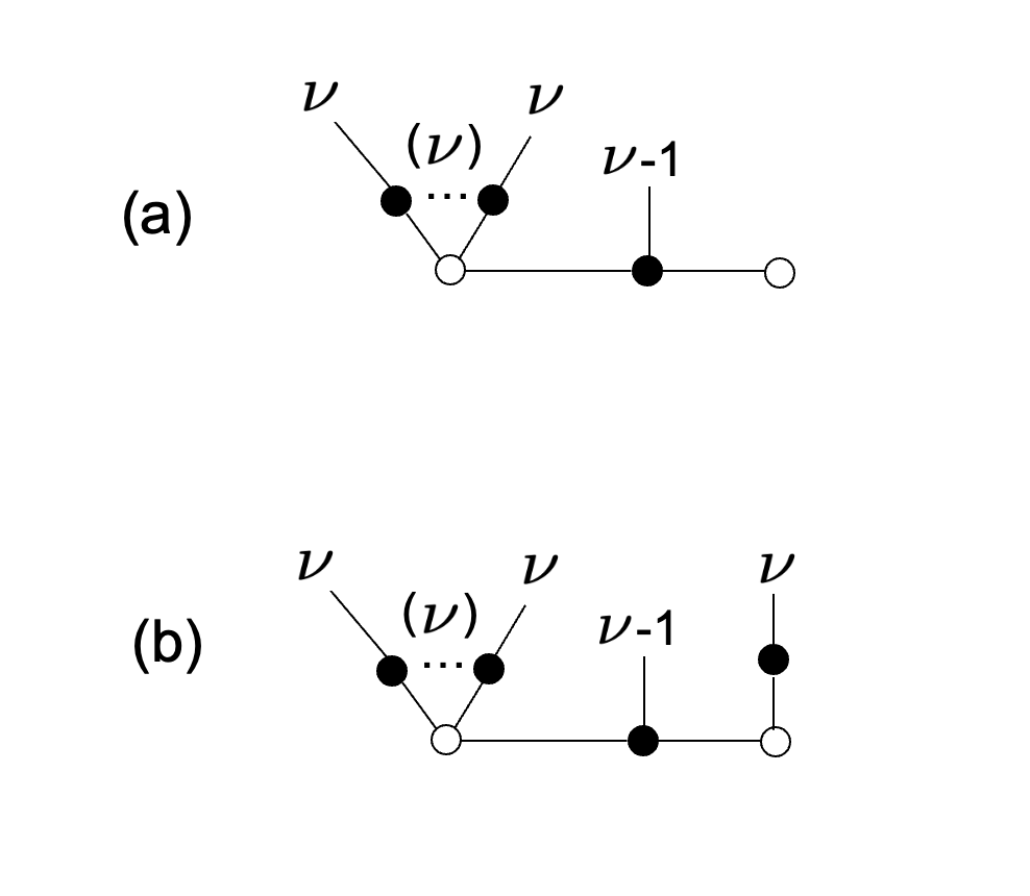}
\caption{\label{label}  Initial trees (a) ${\mathcal{G}}^{(4)}_{d_{0}, \nu}(w), \nu=3n+2$, (b)  ${\mathcal{G}}^{(6)}_{d_{0}, \nu}(w), \nu=3n+4$.}
\end{figure}

We now introduce three families ${\mathcal{G}}^{(t)}_{d, \nu}(w), t=4,5,6$ whose initial trees  ${\mathcal{G}}^{(t)}_{d_{0}, \nu}(w)$ are represented in Fig.6 for $t=4,6$, and Fig.5(a) for $t=5$. 

      \bigskip\par 

           \begin{lem}  The polynomials  ${\mathcal{G}}^{(t)}_{d,\nu}(w), \nu=3n+t-2, n \in {\Bbb{Z}}^{\geq 0},  t= 4, 5, 6$, have the following degrees 
           
           (a) $\nu=3n+2$:  $d(\nu,l,r)=(\nu +1)(l\nu+r+1)$, $l \in {\Bbb{Z}}^{\geq 1}$, $r=0, 1, \dots, 3n+1$
           
               (b)  $\nu=3n+3$: $d(\nu,l,r)=\nu(\nu+1)+3$ for  $r=-1$, $l=1$; $d(\nu,l,r)=\nu(l(\nu+1)+1)+3$, for $r=0$, $l \in  {\Bbb{Z}}^{\geq 1}$;  $d(\nu,l,r)=\nu(l(\nu+1)+r+1)$ for $r=1, \dots 3n+3 $, $l \in  {\Bbb{Z}}^{\geq 1}$.
            
                     (c) $\nu=3n+4$: $d(\nu,l,r)=(\nu +1)(l (\nu-1)+3(r+\lfloor\frac{l+1}{3}\rfloor +1))$ with $r=-1, 0,1, \dots, n$ if $l =3k+2$ and $r=0,1, \dots, n$ if $l = 3k+a, a\in \{ 0, 1\}$, $l \in  {\Bbb{Z}}^{\geq 1}$, $k \in  {\Bbb{Z}}^{\geq 0}$.
           
  Their trees are obtained from the initial trees ${\mathcal{G}}^{(t)}_{d_{0}, \nu}(w)$ by means of a series of substitution rules such that the associated words belong to an infinite language $\mathcal{L}_{E}$.
                \end{lem}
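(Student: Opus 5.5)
The proof will follow the pattern of the two previous lemmas. For each $t=4,5,6$ we start from the explicit initial tree (Fig. 6(a), Fig. 5(a), Fig. 6(b)) and record its triple $(d_{0},N_{-1},N_{1})$. We then fix a small set of substitution rules, analogous to $\alpha,\beta,\gamma,\bar\delta$. For each rule we write down how it changes $(d,N_{-1},N_{1})$. Along every word in a chosen infinite family we check that condition $(E)$ holds. The existence of the polynomials ${\mathcal{G}}^{(t)}_{d,\nu}(w)$ then comes from the correspondence between bicolored plane trees and Belyi polynomials: every tree produced gives a polynomial with two critical values whose critical multiplicities are the vertex degrees minus one.

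\textbf{Reading off the degree formulas.} The formulas suggest two nested counters: an outer index $l$ and an inner index $r$.
\begin{itemize}
\item Case (a), $\nu=3n+2$. The inner step is a $\gamma$-type rule adding a black vertex of degree $\nu+1$, so $d$ grows by $\nu+1$ and $N_{-1}$ by $1$. It is repeated for $r=0,\dots,3n+1$. After that a ``restart'' rule raises $l$, moving $d$ from $(\nu+1)(l\nu+3n+2)$ to $(\nu+1)((l+1)\nu+1)$, which is again an increment of $\nu+1$. So the word is essentially $\gamma^{k}$ for all $k$, but the tree is reorganised whenever $r$ wraps around.
\item Case (b), $\nu=3n+3$. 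The steps are $+\nu$ within a block. Between blocks there are the extra $+3$ corrections at $r=-1,0$, which are $\bar\delta$/$\alpha$-type rules adding three black leaves.
\item Case (c), $\nu=3n+4$. The term $\lfloor (l+1)/3\rfloor$ indicates a period-three pattern in $l$. The extra value $r=-1$ occurs only when $l\equiv 2 \pmod 3$.
\end{itemize}

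\textbf{Verifying condition $(E)$.} In each case we compute $N_{-1}(\mathcal{G},\nu)$ for the tree $(l,r)$; it should be $l\nu+r+\text{const}$, with the constant read from the initial tree. We take $N_{1}(\mathcal{G},\nu)$ to be the number of white vertices of degree $\nu+1$, which we expect to be $l$ or $l$ plus a constant. We then check
\[
\Big\lfloor \tfrac{d}{\nu+1}\Big\rfloor=N_{-1},\qquad \Big\lfloor\tfrac{d-1}{\nu}\Big\rfloor-\Big\lfloor\tfrac{d}{\nu+1}\Big\rfloor=N_{1}.
\]
In case (a), for example, $d/(\nu+1)=l\nu+r+1$ exactly, and
\[
\frac{d-1}{\nu}=(l\nu+r+1)+\frac{l\nu+r}{\nu},
\]
so the difference of floors is $l+\lfloor r/\nu\rfloor=l$ for $r<\nu$. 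This is precisely why $r$ must stop at $3n+1=\nu-1$. We need $N_{1}=l$, which means each restart must create exactly one new white vertex of multiplicity $\nu$, for instance by completing a white vertex to degree $\nu+1$. Cases (b) and (c) are handled the same way, with the $+3$ shifts absorbed by the floors. Since $l$ is unbounded, the language $\mathcal{L}_{E}$ is infinite.

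\textbf{Main obstacle.} The hard step is describing the restart rule explicitly and checking that it keeps the multiplicity bookkeeping exact. It must raise $N_{1}$ by one while all other white vertices keep multiplicity below $\nu$. I would first draw the $l=1\to2$ transition for small $n$ (e.g.\ $n=0,1$) and take the pattern from there. For case (c) I would treat the three residues $l\bmod 3$ separately, since the black-leaf additions come in groups of three. I would then check that the boundary values of $r$ are exactly where the floor identities would first fail.
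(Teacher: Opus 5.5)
Your overall framework is the paper's: an initial tree, substitution rules with their effect on $(d,N_{-1},N_1)$, and a check of condition $(E)$ along the words. Your floor computation in (a) is correct, including why $r$ must stop at $\nu-1=3n+1$.

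\textbf{The gap.} No tree is ever produced. You give neither the initial trees and their triples nor any substitution rule, and the rule that raises $N_1$ is left as the ``main obstacle''. Evaluating $\lfloor d/(\nu+1)\rfloor$ and $\lfloor (d-1)/\nu\rfloor-\lfloor d/(\nu+1)\rfloor$ at the claimed degrees only tells you what $N_{-1}$ and $N_1$ \emph{would have to be}. The content of the lemma is that trees exist with exactly that many black and white vertices of degree $\nu+1$, and all other vertices of degree at most $\nu$. That construction is what the paper's proof consists of:
\begin{itemize}
\item $t=4$: it starts from $d_0=(\nu+1)^2$ and uses $\alpha,\gamma\colon(d,N_{-1},N_1)\mapsto(d+\nu+1,N_{-1}+1,N_1)$ and $\beta\colon(d,N_{-1},N_1)\mapsto(d+\nu+1,N_{-1}+1,N_1+1)$. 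The words are the prefixes of $(\gamma\alpha^{3n}\beta)^p$. In effect, each new black vertex of degree $\nu+1$ is hung on one white vertex, which over a block of $\nu$ steps grows from a leaf to degree $\nu+1$.
\item $t=5$: it starts from $d_0=\nu(\nu+1)+3$. The rules $\alpha,\gamma$ add $\nu$ to $d$ and $1$ to $N_{-1}$; $\beta$ adds $\nu-3$ with both counts unchanged; $\delta$ adds $\nu+3$ and raises both counts. The words are the prefixes of $\alpha(\beta\gamma^{\nu-1}\delta)^p$. So your guess for (b), separate ``$+3$'' steps adding black leaves, does not fit the listed degrees. The consecutive increments are $\nu$, then $\nu-3$ with no new full black vertex, then $\nu,\dots,\nu$, then $\nu+3$ at the passage $l\to l+1$.
\item $t=6$: it starts from $d_0=(\nu+1)(\nu+2)$ with five rules, each adding $3(\nu+1)$ to $d$ and $3$ to $N_{-1}$; $\beta,\delta,\bar\delta$ also raise $N_1$. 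The period $\alpha^n\beta\,\gamma\alpha^n\delta\,\alpha^n\bar\delta$ consists of blocks of $n+1$, $n+2$, $n+1$ steps. This is what produces the term $\lfloor (l+1)/3\rfloor$ and the extra value $r=-1$ when $l\equiv 2 \pmod 3$.
\end{itemize}

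\textbf{An alternative way to get existence.} If you want existence without designing rules, a general argument closes the gap. Two partitions $\lambda,\mu$ of $d$ with $\ell(\lambda)+\ell(\mu)=d+1$ are always the black and white degree sequences of some bicolored plane tree. Proof sketch: some side has a part equal to $1$, since otherwise $2d\ge 2(d+1)$; remove that leaf, lower by one a part $\ge 2$ on the other side (one exists for $d\ge 2$), and induct. Every such tree is the dessin of a polynomial with critical values $\pm1$. In case (a), for instance, take $\lambda$ to be $l\nu+r+1$ parts equal to $\nu+1$. Take $\mu$ to be $l$ parts equal to $\nu+1$, with the remaining $d-l(\nu+1)$ split into $d+1-(l\nu+r+1)-l$ parts in $[1,\nu]$. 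This pair is admissible, and your floor computation then gives $(E)$. This yields the polynomials, but not the sequential substitution-rule description in the lemma's last sentence; that is what the paper's explicit rules supply.
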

     
      \begin{proof}
 \bigskip\par 
(a)  The polynomials ${\mathcal{G}}^{(4)}_{d, \nu}(w)$ are defined for $\nu=3n+2, n \in {\Bbb{Z}}^{\geq 0}$, with initial tree having $d_{0}=(\nu+1)^2$ and given in Fig.6(a). We consider the transformations (Fig.7)
      \bigskip\par\noindent
$\alpha: \mathcal{T}_{\nu}(d, N_{-1},  N_{1}) \longrightarrow \mathcal{T}_{\nu}(d+\nu+1, N_{-1}+1,  N_{1}) $, 
\par\noindent
$\beta: \mathcal{T}_{\nu}(d, N_{-1},  N_{1})  \longrightarrow \mathcal{T}_{\nu}(d+\nu+1, N_{-1}+1,  N_{1}+1) $, 
\par\noindent
$\gamma: \mathcal{T}_{\nu}(d, N_{-1},  N_{1}) \longrightarrow \mathcal{T}_{\nu}(d+\nu+1, N_{-1}+1,  N_{1}) $. 

            \begin{figure}[h]
 \includegraphics[width=22pc]{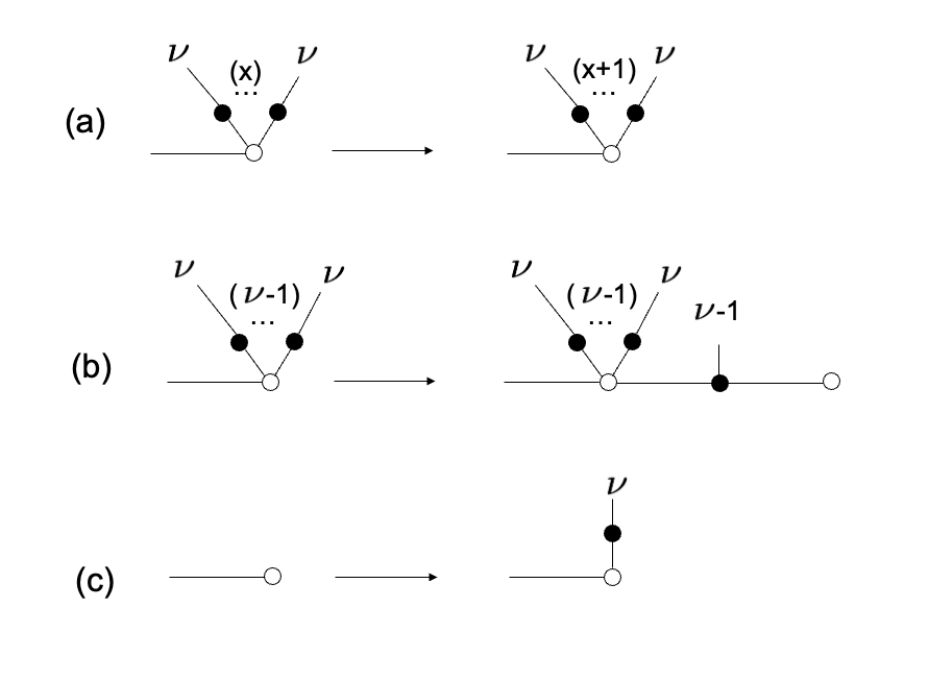}
\caption{\label{label}  Substitution rules for ${\mathcal{G}}^{(4)}_{d, \nu}(w)$, (a) $\alpha$, $1 \leq x < \nu-1$ (b) $\beta$, (c) $\gamma$.  }
\end{figure}
 \bigskip\par 
The words of $\Sigma=\{\alpha, \beta, \gamma \}$ corresponding to polynomials satisfying condition $(E)$ are 
              \bigskip\par  
   $\{\gamma\}$ $\cup$   $\{\gamma\alpha^r\}_{r= 1, 2, \dots 3n}$ $\cup$ $\{A \beta\}$ $\cup$ $\{A\beta\gamma\}$ 
  $\cup$ $\{A\beta\gamma\alpha^r\}_{r= 1, 2, \dots 3n}$ $\cup$ $\{A \beta A \beta \}$ $\cup$ $\dots$  
                \bigskip\par  
  $\cup$ $\{A (\beta A)^{p-1}\beta \}$ 
  $\cup$ $\{A (\beta A)^{p-1}\beta\gamma \}$   $\cup$ $\{A (\beta A)^{p-1}\beta\gamma \alpha^r\}_{r= 1, 2, \dots 3n}$ $\cup$ $\{A (\beta A)^{p} \beta\}$ $\cup$ 
  $\dots$ $\subset \mathcal{L}_{E} $         
\bigskip\par\noindent     
where $A= \gamma\alpha^{3n}$ and $p \in {\Bbb{Z}}^{\geq 1}$.    
We get the trees $\mathcal{T}_{\nu}(d, N_{-1},  N_{1})$  with $d=(\nu +1)(l\nu+r+1)$, $N_{-1}=l\nu+r+1$, $N_{1}=l$, $l \in {\Bbb{Z}}^{\geq 1}$, $r=0, 1, \dots, 3n+1$.

   \bigskip\par 
(b) ${\mathcal{G}}^{(5)}_{d, \nu}(w)$ are defined for $\nu=3n+3, n \in {\Bbb{Z}}^{\geq 0}$ with initial tree as in Fig.5(a) having $d_{0}=(\nu+1)\nu+3$.  The trees $\mathcal{T}_{\nu}(d, N_{-1},  N_{1})$  have  

(1) $d(\nu,l,r)=\nu(\nu+1)+3$, $N_{-1}=\nu$, $N_{1}=1$, for $l=1$ and $r=-1$.

(2) $d(\nu,l,r)=\nu(l(\nu+1)+1)+3$, $N_{-1}=l\nu+1$, $N_{1}=l$ for $r=0$, $l \in  {\Bbb{Z}}^{\geq 1}$.

(3) $d(\nu,l,r)=\nu(l(\nu+1)+r+1)$, $N_{-1}=l\nu+r$, $N_{1}=l$ for $r=1, \dots \nu $, $l \in  {\Bbb{Z}}^{\geq 1}$.
   \bigskip\par 
The transformations for $t=5$ are represented in Fig.8
      \bigskip\par\noindent
$\alpha: \mathcal{T}_{\nu}(d, N_{-1},  N_{1}) \longrightarrow \mathcal{T}_{\nu}(d+\nu, N_{-1}+1,  N_{1}) $, 
\par\noindent
$\beta: \mathcal{T}_{\nu}(d, N_{-1},  N_{1})  \longrightarrow \mathcal{T}_{\nu}(d+\nu-3, N_{-1},  N_{1}) $, 
\par\noindent
$\gamma: \mathcal{T}_{\nu}(d, N_{-1},  N_{1}) \longrightarrow \mathcal{T}_{\nu}(d+\nu, N_{-1}+1,  N_{1}) $, 
\par\noindent
$\delta: \mathcal{T}_{\nu}(d, N_{-1},  N_{1}) \longrightarrow \mathcal{T}_{\nu}(d+\nu+3, N_{-1}+1,  N_{1}+1) $. 

            \begin{figure}[h]
 \includegraphics[width=17pc]{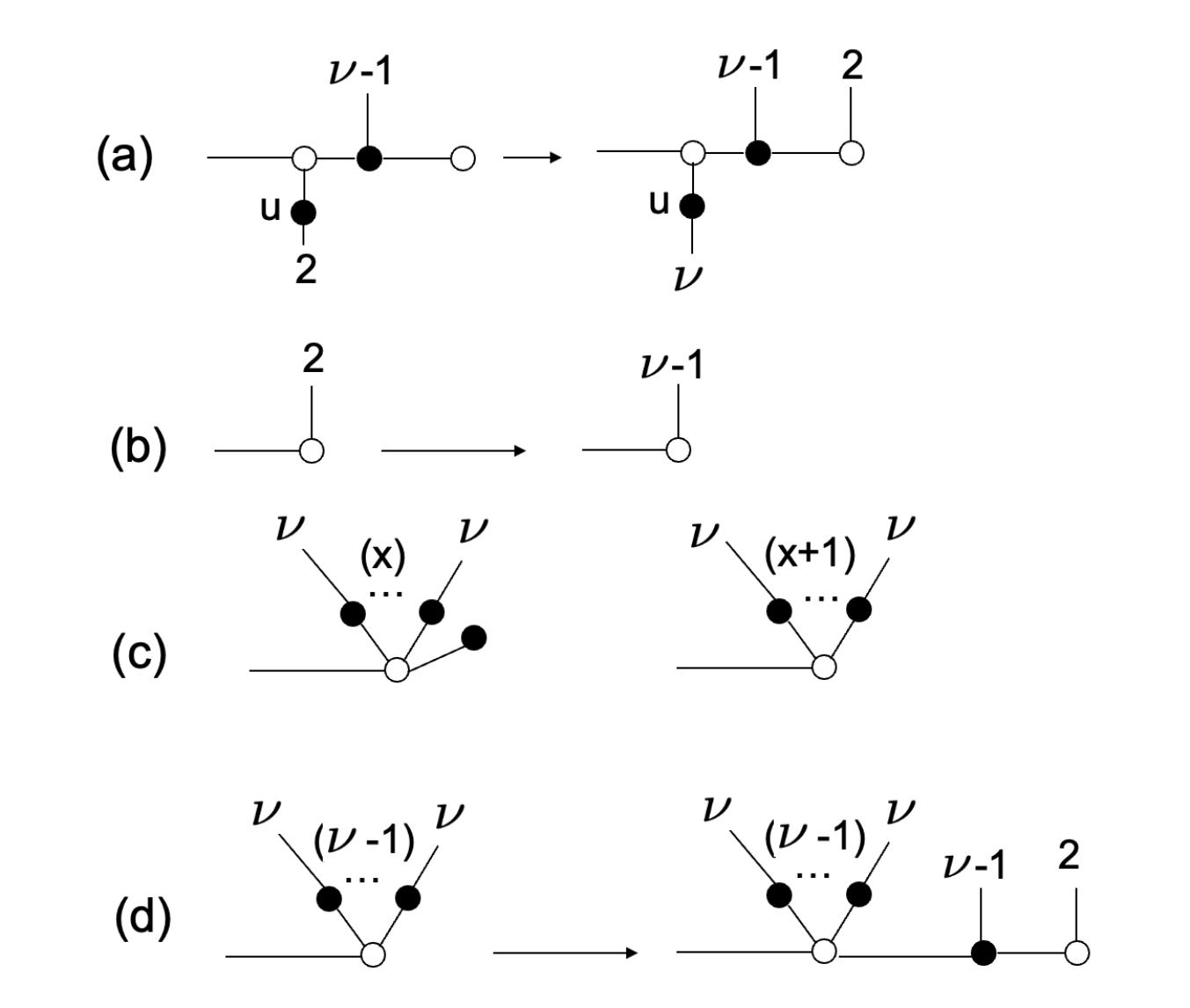}
\caption{\label{label}  Transformations for ${\mathcal{G}}^{(5)}_{d, \nu}(w)$, (a) $\alpha$, (b) $\beta$, (c) $\gamma$, (d) $\delta$.  }
\end{figure}
      \bigskip\par 

 The alphabet is then $\Sigma=\{\alpha, \beta, \gamma, \delta \}$ and the allowed words are
    \bigskip\par 
   $\{\alpha\}$ $\cup$   $\{\alpha\beta\gamma^r\}_{r= 0, 1, \dots, \nu-1}$  $\cup$ $\{\alpha A\}$ $\cup$ $\{\alpha A \beta\}$ $\cup$ \dots  $\cup$ $\{\alpha A^{p}\}$ $\cup$ $\{\alpha A^{p}\beta\}$ $\cup$ \dots  $\subset \mathcal{L}_{E} $ 
     \bigskip\par  
   where $A=\beta\gamma^{\nu-1}\delta$ and $p \in {\Bbb{Z}}^{\geq 1}$.

   \bigskip\par 
(c) ${\mathcal{G}}^{(6)}_{d, \nu}(w)$ are defined for $\nu=3n+4, n \in {\Bbb{Z}}^{\geq 0}$ with initial tree as in Fig.6(b) having $d_{0}=(\nu+1)(\nu+2)$. The transformations for $t=6$ are (Fig.9)
      \bigskip\par\noindent
$\alpha: \mathcal{T}_{\nu}(d, N_{-1},  N_{1}) \longrightarrow \mathcal{T}_{\nu}(d+3(\nu+1), N_{-1}+3,  N_{1}) $, 
\par\noindent
$\beta: \mathcal{T}_{\nu}(d, N_{-1},  N_{1})  \longrightarrow \mathcal{T}_{\nu}(d+3(\nu+1), N_{-1}+3,  N_{1}+1) $, 
\par\noindent
$\gamma: \mathcal{T}_{\nu}(d, N_{-1},  N_{1}) \longrightarrow \mathcal{T}_{\nu}(d+3(\nu+1), N_{-1}+3,  N_{1}) $, 
\par\noindent
$\delta: \mathcal{T}_{\nu}(d, N_{-1},  N_{1}) \longrightarrow \mathcal{T}_{\nu}(d+3(\nu+1), N_{-1}+3,  N_{1}+1) $, 
\par\noindent
$\bar{\delta} : \mathcal{T}_{\nu}(d, N_{-1},  N_{1}) \longrightarrow \mathcal{T}_{\nu}(d+3(\nu+1), N_{-1}+3,  N_{1}+1) $.

            \begin{figure}[h]
 \includegraphics[width=20pc]{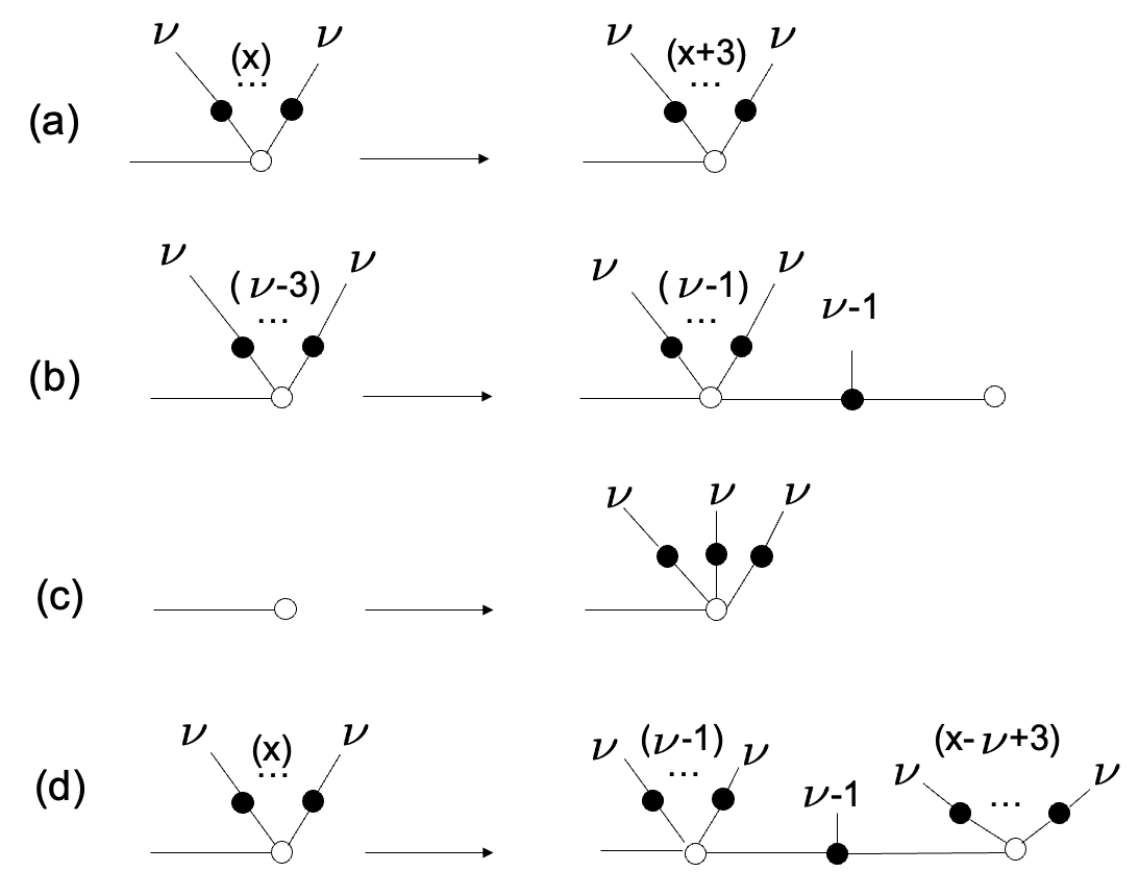}
\caption{\label{label}  Transformations for ${\mathcal{G}}^{(6)}_{d, \nu}(w)$, (a) $\alpha$, $1 \leq x < \nu-3$, (b) $\beta$, (c) $\gamma$, (d) $\delta$ if $x=\nu-1$ and $\bar{\delta}$ if  $x=\nu-2$.   }
\end{figure}

For $n=0$ the allowed words of the alphabet $\Sigma=\{\alpha, \beta, \gamma, \delta, \bar{\delta} \}$ are 
       \bigskip\par 
       $\beta$,  $\beta\gamma$,  $\beta\gamma\delta$, $A$,  $\dots$, $A^{p-1}$, $A^{p-1}\beta$, $A^{p-1}\beta \gamma$, $A^{p-1}\beta\gamma \delta$, $A^{p}$, $\dots$            
 \bigskip\par\noindent
   where $A=\beta\gamma\delta\bar{\delta}$ and $p \in {\Bbb{Z}}^{\geq 2}$. The trees $\mathcal{T}_{4}(d, N_{-1},  N_{1})$  have  $d=15(l+r+\lfloor\frac{l+1}{3}\rfloor +1)$, $N_{-1}=3(l+r+\lfloor\frac{l+1}{3}\rfloor +1)$, $N_{1}=l$, with  $l \in  {\Bbb{Z}}^{\geq 1}$,   $r=-1, 0$ if $l =3k+2$ and $r=0$ if $l = 3k+a$, $k \in  {\Bbb{Z}}^{\geq 0}$, $a\in \{ 0, 1\}$.  The allowed words for $n \geq 1$ are
     \bigskip\par 
  $\{\alpha^r\}_{r= 1, 2, \dots, n}$ $\cup$ $\{ \alpha^{n}\beta\}$ $\cup$ $\{ \alpha^{n}\beta \gamma\alpha^r\}_{r= 0, 1, 2, \dots, n}$ $\cup$ 
  $\{ \alpha^{n}\beta \gamma\alpha^{n}\delta\alpha^r\}_{r= 0, 1, 2, \dots, n}$ $\cup$ $\{ A\}$  $\cup$  $\dots$  
   \bigskip\par
  $\cup$ $\{ A^{p-1}\alpha^r\}_{r= 0, 1, 2, \dots, n} $ $\cup$ $\{ A^{p-1}\alpha^n\beta\} $ $\cup$ $\{ A^{p-1}\alpha^{n}\beta\gamma\alpha^r\}_{r= 0, 1, 2, \dots, n}$ 
  $\cup$ $\{ A^{p-1}\alpha^{n}\beta \gamma\alpha^{n}\}  $  
   \bigskip\par
  $\cup$ $\{ A^{p-1}\alpha^{n}\beta \gamma\alpha^{n}\delta\alpha^r\}_{r= 0, 1, 2, \dots, n}$ $\cup$ $\{ A^{p}\alpha^r\}_{r= 0, 1, 2, \dots, n} $ $\cup$ 
  $\dots$ $\subset \mathcal{L}_{E} $ 
          \bigskip\par\noindent
 where $A=\alpha^{n}\beta \gamma\alpha^{n}\delta\alpha^{n}\bar{\delta}$. Having in mind that $l \in  {\Bbb{Z}}^{\geq 1}$ and $k \in  {\Bbb{Z}}^{\geq 0}$,
the trees $\mathcal{T}_{\nu}(d, N_{-1},  N_{1})$  have   $d=(\nu +1)(l (\nu-1)+3(r+\lfloor\frac{l+1}{3}\rfloor +1))$, $N_{-1}=l (\nu-1)+3(r+\lfloor\frac{l+1}{3}\rfloor +1)$, $N_{1}=l$, with $r=-1, 0,1, \dots, n$ if $l =3k+2$ and $r=0,1, \dots, n$ if $l = 3k+a, a\in \{ 0, 1\}$. 

 \end{proof}

      \bigskip\par

 In order to include other degrees divisible by 3 we can make an interpolation between two consecutive degrees $d_{1}$ and $d_{2}$, whose trees are known from the Lemmas, and analyse trees associated to degrees $d_{1}+3i$, where $i=0, \dots, m$, $d_{2}=d_{1}+3m$. We study the cases ${\mathcal{G}}^{(t)}_{d,\nu}(w), t=4,5,6$ where the construction is asymptotic ($d$ is not bounded by $\nu$) and we add additional polynomials with initial trees given in Fig.10.
 
             \begin{figure}[h]
 \includegraphics[width=18pc]{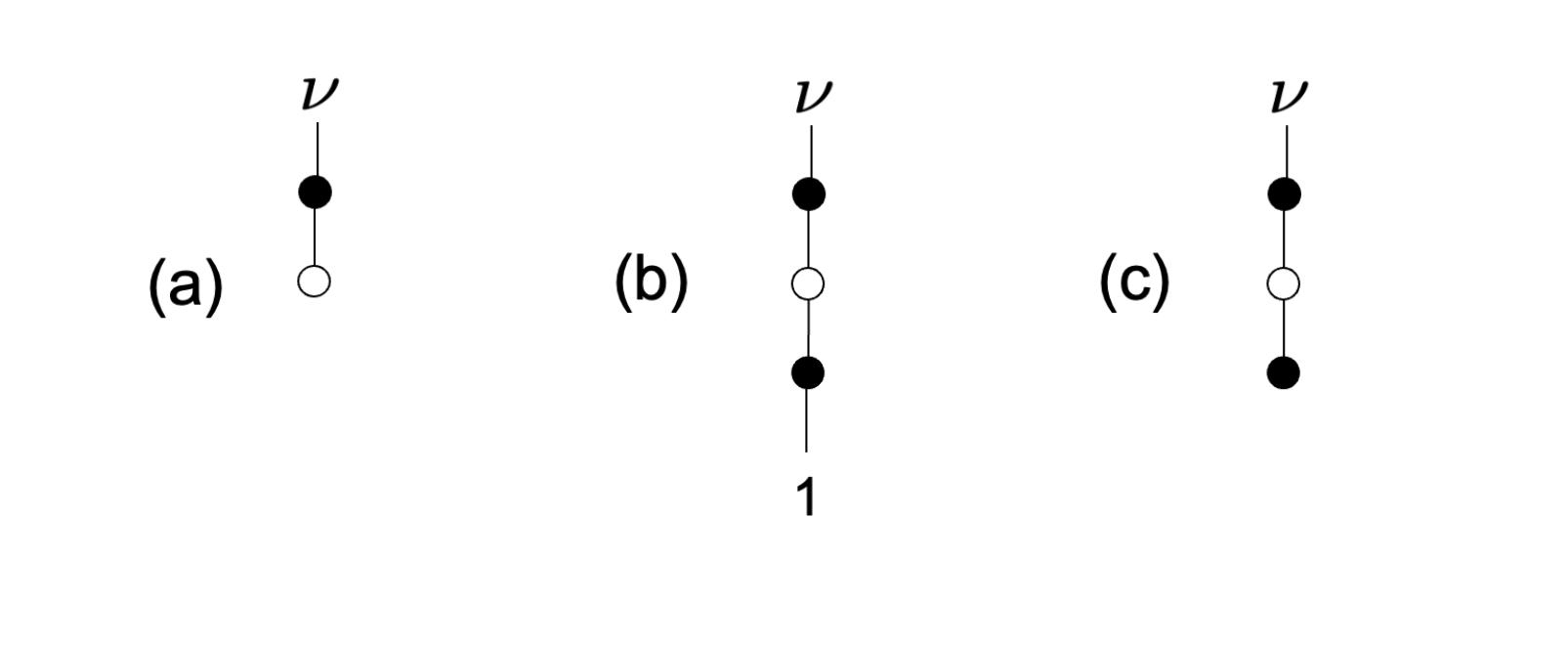}
\caption{\label{label}  Initial trees in Lemma 2.6 for (a) $\nu = 3n+2$, (b) $\nu = 3n+3$, (c) $\nu = 3n+4$  }
\end{figure}
      \bigskip\par 
      
          \begin{lem} The series ${\mathcal{G}}^{(t)}_{d,\nu}(w), t=4,5,6$, can be extended to a larger series of polynomials, also denoted by ${\mathcal{G}}^{(t)}_{d,\nu}(w)$, having the following degrees $d=d(\nu, l, r, i)$
       \bigskip\par          
  (a)  $\nu= 3n+2$,  $n \in {\Bbb{Z}}^{\geq 1}$: $d(\nu, l, r, i)=(\nu+1)(l\nu + r+1) + 3i$,  $l \in {\Bbb{Z}}^{\geq 0}$,  $r=0,1,\dots 3n+1$, $i=0,1,\dots n$.
        \bigskip\par 
  (b) $\nu= 3n+3$,  $n \in {\Bbb{Z}}^{\geq 0}$:
  
  (1) $n=0$, $l \in {\Bbb{Z}}^{\geq 0}$, $r \in \{1,2,3\}$: $d(3, l, r, i)= 3(4l+r+1)+3i$, where $i=0$ if $r \in \{1,2\}$ and $i=0,1$, if $r=3$,
  
  (2) $n \in {\Bbb{Z}}^{\geq 1}$, $l \in {\Bbb{Z}}^{\geq 0}$, $r=0, \dots \nu$: $d(\nu, l, r, i)=\nu(l(\nu +1)+r+1)+ 3i$, with  $i=1,\dots n$ if $r=0$;  $i=0,1,\dots, n$ if $r \in \{1, \dots , \nu-1\}$ and $i=0,1,\dots, n+1$ if $r= \nu$.
      \bigskip\par 
  (c) $\nu= 3n+4$,  $n \in {\Bbb{Z}}^{\geq 0}$, $l_{k}=3k+b, b\in \{ 0, 1, 2 \}$ $k \in {\Bbb{Z}}^{\geq 0}$: 
  
  (1)  $l_{k}=0, r=-1$:  $d(\nu, l_{k}, r, i)= \nu +2+3i$, $i=0,1,\dots 2n+2$,
  
  (2) $l_{k}  \in {\Bbb{Z}}^{\geq 0}$, $r=0,1,\dots n$ if $b\in \{ 0, 1 \}$, $r=-1,0,\dots n$ if $b=2$: $d(\nu, l_{k}, r, i)=(\nu +1)(l_{k} (\nu-1)+3(r+\lfloor\frac{l_{k}+1}{3}\rfloor +1))+ 3i$, $i=0,1,\dots \nu$.
          
  The  new polynomials satisfy condition $(E)$.
  \end{lem}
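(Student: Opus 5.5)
The degrees in Lemma 2.5 are $d_1=d(\nu,l,r)$. The plan is to fill in the gaps between consecutive such degrees, in steps of $3$, by a new substitution rule that adds $3$ to the degree and changes neither $N_{-1}(\mathcal{G},\nu)$ nor $N_{1}(\mathcal{G},\nu)$. Then I check condition $(E)$ arithmetically for each interpolated degree $d_1+3i$.

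\textbf{The local move.} The rule is of the type $\delta$ / $\bar{\delta}$ in Lemma 2.4: attach three black leaf vertices to the white vertex of lowest multiplicity, or to a leaf white vertex. This realises $\mathcal{T}_{\nu}(d,N_{-1},N_{1})\to\mathcal{T}_{\nu}(d+3,N_{-1},N_{1})$. The new critical points have multiplicity at most $3<\nu$, at least for $\nu\ge 4$. The borderline cases $\nu=2,3$ ($n=0$ in (b)(1), and (a) excluded by $n\ge1$) are handled separately, which is why (b)(1) has such a restricted range of $i$.

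\textbf{Starting trees.} For $l\ge 1$ I start from the trees of Lemma 2.5. For $l=0$, which is not covered there, I take the new initial trees of Fig.~10:
\begin{itemize}
\item in (a): a tree of degree $(\nu+1)(r+1)$ with $N_{-1}=r+1$ and $N_{1}=0$;
\item in (b): degree $\nu(r+1)$ with $N_{-1}=r$, $N_{1}=0$;
\item in (c)(1): degree $\nu+2$ with $N_{-1}=1$, $N_{1}=0$.
\end{itemize}
These are obtained from a star with $\nu$ edges by attaching extra black stars in the same way as $\alpha$ or $\gamma$. Repeating the local move $i$ times gives a tree with degree $d_1+3i$ and the same counts.

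\textbf{Checking $(E)$.} Condition $(E)$ requires two things. First, $\lfloor (d_1+3i)/(\nu+1)\rfloor=N_{-1}$. Second, $\lfloor (d_1+3i-1)/\nu\rfloor-\lfloor(d_1+3i)/(\nu+1)\rfloor=N_{1}$.

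In case (a), $d_1=(\nu+1)N_{-1}$ with $N_{-1}=l\nu+r+1$. The first equality holds iff $3i\le\nu$, i.e.\ $i\le n$. For the second, write $d_1+3i-1=\nu N_{-1}+N_{-1}+3i-1$. The quotient by $\nu$ is $N_{-1}+l+\lfloor (r+3i)/\nu\rfloor$, which equals $N_{-1}+l$ precisely when $r+3i<\nu$. I expect the precise bookkeeping to match the stated ranges after a small adjustment to the starting tree: for large $r$ one should start from the next tree in the series rather than the previous one. This is exactly where the interpolation between consecutive degrees $d_1,d_2$ is used, so the union of the intervals covers every multiple of $3$.

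In case (b), $d_1=\nu(l(\nu+1)+r+1)$, and the two floors give $l\nu+r$ (plus a correction involving $l$) and $l$. The edge cases $r=0$ and $r=\nu$ shift the admissible $i$ by one, explaining $i=1,\dots,n$ and $i\le n+1$.

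In case (c), the same computation with $N_{-1}=l_k(\nu-1)+3(\cdots)$ and $\nu+1=3(n+1)+2$ gives $i\le\nu$. For (c)(1), starting from $d=\nu+2$, both conditions hold while $\nu+2+3i<2\nu+\dots$, giving $i\le 2n+2$.

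\textbf{Main obstacle.} The hard part is not the arithmetic but verifying that the modified planar trees really are admissible. This means checking that the added vertices do not raise any black multiplicity to $\nu$ or any white multiplicity to $\nu$, so that $N_{\pm1}(\mathcal{G},\nu)$ stay unchanged. It also means matching the endpoints of the $i$-ranges exactly with the next degree of Lemma 2.5, so that no spurious case fails $(E)$. I would handle this case by case for $t=4,5,6$ using the explicit pictures in Figs.~7--10, and verify small instances ($n=0,1$) numerically.
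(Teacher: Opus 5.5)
\textbf{There is a genuine gap.} Your proposal rests on a single move that adds $3$ to the degree and leaves both $N_{-1}(\mathcal{G},\nu)$ and $N_{1}(\mathcal{G},\nu)$ unchanged. No such move can give condition $(E)$ on the stated ranges. With $C_d=\lfloor d/(\nu+1)\rfloor$ and $D_d=\lfloor (d-1)/\nu\rfloor$, the target values $C_d$ and $D_d-C_d$ are not constant on the interpolation intervals. Your own computation in (a) already shows this: $N_1$ must equal $l+\lfloor (r+3i)/\nu\rfloor$, which becomes $l+1$ once $r+3i\ge\nu$.

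Concrete failures of the fixed pair $(N_{-1},N_1)$:
\begin{itemize}
\item (a) with $\nu=5$, $l=1$, $r=2$: you start at $d_1=48$ with $(N_{-1},N_1)=(8,1)$. At $d=51$, which is $i=1$ and inside the claimed range, $(E)$ demands $N_{-1}=\lfloor 51/6\rfloor=8$ but $N_1=\lfloor 50/5\rfloor-8=2$.
\item (b) with $\nu=6$, $l=1$: the required pair changes already at $i=1$. For $r=1$, $d=54$ needs $(7,1)$ and $d=57$ needs $(8,1)$. For $r=3$, $d=66$ needs $(9,1)$ and $d=69$ needs $(9,2)$.
\item (c)(1) with $n=0$, $\nu=4$: the degrees $6,9,12$ require $(1,0)$, $(1,1)$, $(2,0)$. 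So your claim that $(E)$ holds up to $i=2n+2$ with $N_{-1}=1$ fails at $i=1$.
\end{itemize}
Your suggested patch, starting ``from the next tree in the series'', cannot help. Your moves only increase the degree, and the next tree of Lemma 2.5 already has $N_{-1}$ one larger than $(E)$ requires at the intermediate degrees.

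\textbf{What the paper's proof does instead.} For all $\nu$ here ($\nu\ge 3$), a step $d\mapsto d+3$ changes each of $C_d$ and $D_d$ by $0$ or $1$. That gives four jump patterns, and the proof realises each with its own degree-$3$ tree move, changing $(N_{-1},N_1)$ by:
\begin{itemize}
\item $\bar\alpha$: $(0,0)$;
\item $\alpha$: $(0,+1)$, by completing a white vertex to $\nu+1$ edges;
\item $\beta$: $(+1,-1)$;
\item $\bar\beta$: $(+1,0)$.
\end{itemize}
The proof then does three things:
\begin{itemize}
\item computes, case by case, the jump sets $U_C$ and $U_D$;
\item reads off the word of moves from them;
\item uses the periodicity $X_{d+3+\nu(\nu+1)}-X_{d+\nu(\nu+1)}=X_{d+3}-X_d$ (with $3\nu(\nu+1)$ in case (c)) to show the words are powers of a cyclic block $W$, as in Eqs. (2.7)--(2.13).
\end{itemize}
This periodicity is what yields the infinite families.

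Your remark that the new critical points have multiplicity at most $3$ also misses the point. Repeatedly attaching leaves to a white vertex raises that vertex's multiplicity, and the paper's $\alpha$ uses exactly this to create a new white vertex of multiplicity $\nu$. Without moves that change $N_{-1}$ and $N_1$ in step with the jumps of $C_d$ and $D_d$, no choice of starting trees closes the gap.
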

           \begin{proof}     
               We denote  $C_{d}= \Big \lfloor \frac{d}{\nu+1} \Big \rfloor$, $D_{d}=\Big \lfloor \frac{d-1}{\nu} \Big \rfloor$, for a given value of $\nu$, and 
$U_{X}$, $X=C, D$ are the sets where $X_{d+3}- X_{d}=1$ if $i \in U_{X}$  and  $X_{d+3}- X_{d}=0$ if $i \notin U_{X}$. 

(a) For the cases  $\nu= 3n+2$ in Lemma 2.5 with $n \in {\Bbb{Z}}^{\geq 1}$ (the interpolation is not necessary when $n=0$) we consider $d(\nu, l, r, i)=(\nu+1)(l\nu + r+1) + 3i$, $l \in {\Bbb{Z}}^{\geq 0}$, $r=0,1,\dots 3n+1$, $i=0,1,\dots, n$, which satisfies $d(\nu, l+1, r, i)= d(\nu, l, r, i)+ \nu(\nu+1)$. The initial tree, corresponding to $l=r=i=0$, is represented in Fig.10(a). In this case $U_{C}=\{n\}, U_{D}=\{n-\Big \lfloor\frac{r+1}{3} \Big \rfloor \}$ if $r=0, 1, \dots \nu -2$ and $U_{D}=\{0, n\}$ if $r = \nu -1$. We also have $X_{d+3+\nu(\nu+1)}- X_{d+\nu(\nu+1)} = X_{d+3}- X_{d}, X=C, D$. 

 \bigskip\par
    
        (b)  For $\nu= 3n+3$, $n \in {\Bbb{Z}}^{\geq 0}$  the interpolated degrees $d(\nu, l, r, i)$ are
      \bigskip\par             
                 (1)  $n=0$, $l \in {\Bbb{Z}}^{\geq 0}$, $r \in \{1,2\}$: $d(3, l, r, i)= 3(4l+r+1)$,
                 
                 (2)  $n=0$, $l \in {\Bbb{Z}}^{\geq 0}$, $r=3$: $d(3, l, 3, i)= 3(4l+r+1)+3i$,  $i=0,1$, 
                 
                  (3) $n \in {\Bbb{Z}}^{\geq 1}$, $l \in {\Bbb{Z}}^{\geq 0}$, $r=0$: $d(\nu, l, 0, i)=\nu(l(\nu +1)+1)+ 3i$, $i=1,\dots n$,
                  
                   (4) $n \in {\Bbb{Z}}^{\geq 1}$, $l \in {\Bbb{Z}}^{\geq 0}$, $r \in \{1, \dots , \nu-1\}$: $d(\nu, l, r, i)=\nu(l(\nu +1)+r+1)+ 3i$, $i=0,1,\dots, n$, 
                   
                    (5) $n \in {\Bbb{Z}}^{\geq 1}$, $l \in {\Bbb{Z}}^{\geq 0}$, $r= \nu$: $d(\nu, l, \nu, i)=\nu(l+1)(\nu +1)+ 3i$, $i=0,1,\dots, n+1$,

       \bigskip\par            
  \noindent               
 with the property $d(\nu, l+1, r, i)=d(\nu, l, r, i)+ \nu(\nu+1)$, $r \geq 0$.  The tree with the minimal degree $\nu+3$ ($d(3, 0, 1, 0)$ for $n=0$ and $d(\nu, 0, 0, 1)$ for  $n \geq 1$) is represented in Fig.10(b). The sets $U_{X}$ are 
      \bigskip\par 
    (1)  $n=0, l \in {\Bbb{Z}}^{\geq 0}$: 
    
    (1.1) $r=1, 2$: $U_{C}= U_{D}=\{ 0 \}$,
    
     (1.2) $r=3$: $U_{C}=\{ 1 \}, U_{D}=\{ 0,1 \}$,
    
     (2)  $n \in {\Bbb{Z}}^{\geq 1}, l \in {\Bbb{Z}}^{\geq 0}$: 
     
     (2.1) $r=0$: $U_{C}= U_{D}=\emptyset$,
    
 (2.2) $r=1, 2$: $U_{C}= U_{D}=\{0\}$, 
 
 (2.3) $r=3, \dots \nu-1$: $U_{C}=\{ \lfloor \frac{r}{3}  \rfloor\}$,  $U_{D}=\{0\}$,
 
 (2.4) $r=\nu$: $U_{C}=\{ n+1 \}$,  $U_{D}=\{0, n+1\}$.
    \bigskip\par 
In this case $X_{d+3+ \nu(\nu+1)}- X_{d+ \nu(\nu+1)} = X_{d+3}- X_{d}, X=C, D$. 

  \bigskip\par
(c) For $\nu= 3n+4$ the interpolated degrees $d(\nu, l_{k}, r, i)$, $n \in {\Bbb{Z}}^{\geq 0}$, $l_{k}=3k+b, b\in \{ 0, 1, 2 \}$, $k \in {\Bbb{Z}}^{\geq 0}$ are 
      \bigskip\par 
  (1) $l_{k}=0$, $r=-1$: $d(\nu, 0, -1, i)=\nu+2+3i$,  $i=0,\dots 2n+2$,
  
   (2) $l_{k}=3k+b$, $b\in \{ 0, 1 \}$: $d(\nu, l_{k}, r, i)=(\nu +1)(l_{k} (\nu-1)+3(r+\lfloor\frac{l_{k}+1}{3}\rfloor +1))+ 3i$, $r = 0, \dots, n$, $i=0,\dots \nu$,
   
     (3) $l_{k}=3k+b$, $b=2$: $d(\nu, l_{k}, r, i)=(\nu +1)(l_{k} (\nu-1)+3(r+\lfloor\frac{l_{k}+1}{3}\rfloor +1))+ 3i$, $r = -1, 0, \dots, n$, $i=0,\dots \nu$,

       \bigskip\par 
  \noindent            
 with  $d(\nu, l_{k+1}, r, i)=d(\nu, l_{k}, r, i)+ 3\nu(\nu+1)$ and $X_{d+3+3 \nu(\nu+1)}- X_{d+3 \nu(\nu+1)} = X_{d+3}- X_{d}, X=C, D$, when $d \geq 3(\nu+1)$. The initial tree, with $d(\nu, 0, -1, 0)=\nu+2$, can be seen in Fig.10(c). In this case the sets $U_{X}$ are  
       \bigskip\par 
 (1) $n \in {\Bbb{Z}}^{\geq 0}$, $ l_{k}=0, r=-1$:   $U_{C}=\{n+1, 2n+2\}$,  $U_{D}=\{n, 2n+2\}$ 
 
 (2) $n=0$,  $l_{k}=3k+b, b\in \{ 0, 1, 2 \}$, $k \in {\Bbb{Z}}^{\geq 0}$: 
 
  (2.1) $l_{k}=3k+b, b\in \{ 0, 1\}$, $r=0$: $U_{C}=\{1, 3, 4\}$, $U_{D}=\{0, b+1, 3, 4\}$ 
  
   (2.2) $l_{k}=3k+2$: $U_{C}=\{1, 3, 4\}$ and $U_{D}=\{1, 2, 3\}$ if $r = -1$ and $U_{D}=\{0, 1, 2, 4\}$ if $r = 0$
 
   (3) $n \in {\Bbb{Z}}^{\geq 1}$, $l_{k}=3k+b, b\in \{ 0, 1, 2 \}$, $k \in {\Bbb{Z}}^{\geq 0}$: $U_{C}=\{n+1, 2n+3, 3n+4\}$ and 
 
 (3.1) $l_{k}=3k+b, b\in \{ 0, 1\}$: $U_{D}=\{n-r, 2n-r+b+1, 3n-r+3\}$ if $r = 0, \dots n -1$ and 
 
 $U_{D}=\{0, n+b+1, 2n+3, 3n+4\}$ if $r = n$,
 
 (3.2) $l_{k}=3k+2$:  $U_{D}=\{n-r, 2n-r+1, 3n-r+2\}$ if $r = -1, 0, \dots n -1$ and 
 
 $U_{D}=\{0, n+1, 2n+2, 3n+4\}$ if $r = n$.

    \bigskip\par 
 We now apply a series of transformations $w= \alpha$, $\beta$, $\bar{\alpha}$, $\bar{\beta}$ to $\mathcal{T}_{\nu}(d, N_{-1}, N_{1})$ giving $\mathcal{T}_{\nu}(d+3, N_{-1}+a, N_{1}+b), a,b \in \{-1,0,1\}$. The transformation $\alpha$ consists in adding black vertices to a white one in such a way that it is transformed into one with $\nu+1$ edges, hence $a=0$, $b=1$. With $\beta$ a black vertex is converted into one with $\nu+1$ adjacent edges in such a way that the resulting tree is $\mathcal{T}_{\nu}(d+3, N_{-1}+1, N_{1}-1)$. The transformation $\bar{\alpha}$ is like $\alpha$ but the transformed vertex has less than $\nu+1$ adjacent edges ($a=0$, $b=0$) and  $\bar{\beta}$ is like $\beta$ but without subtracting edges from a white vertex ($a=1$, $b=0$). By employing the notation  $Y= (C_{d+3}- C_{d}, D_{d+3}- D_{d}; w)$, which means that $w$ is applied when the first two terms of the triple take the indicated values, we have $Y \in$ $\{(0,1; \alpha)$,  $(1,0; \beta)$, $(0, 0; \bar{\alpha})$,  $(1, 1; \bar{\beta})\}$. We analyse the words representing the sequence of trees, which can be obtained from the sets $U_{X}$ described above.
 
      \bigskip\par 
 
(a) $\nu= 3n+2$: If we start with the tree with $n=1, l=1$ in Lemma 2.5 then a sequence of allowed words is $W^{c}$, $W^{c}\bar{\alpha}$, $W^{c}\bar{\alpha}\bar{\beta}$, $\dots$, $W^{c+1}$, $W^{c+1}\bar{\alpha}$, $\dots $, $c \in  {\Bbb{Z}}^{\geq 1}$, where $W=(\bar{\alpha}\bar{\beta})^{2}(\alpha\beta)^{2}\alpha\bar{\beta}$ describes the chain of transformations from $d=36$ to $d=66$. The words appear repeated in cycles with one cycle from $l$ to $l+1$. In this Lemma we also incorporate $l=0$, which corresponds here to $d=6$, and then the word representing the transformations to the tree with $d=36$ is also $W$. In what follows $W$ represents the complete word within a cycle, and the allowed words are all its prefixes. If we use the product notation for concatenation, then for $n \geq 2$  the words within a cycle are the prefixes of 

  \begin{equation} 
 W=\bar{Q}_{n}^{2}\left(\prod^{n-1} _{p=1}P_{n, p-1}^{3}\right) (\alpha Q_{n-1})^{2} \alpha \bar{Q}_{n-1}
           \end{equation} 
with $P_{n,p}=\bar{\alpha}^{n-p-1}\alpha \bar{\alpha}^{p} \beta, Q_{n}= \bar{\alpha}^{n}\beta, \bar{Q}_{n}= \bar{\alpha}^{n}\bar{\beta}$.
     \bigskip\par 
 
(b) $\nu= 3n+3$: The cycle in this case is from $l$ to $l+1$ and its word $W$ is 
      \bigskip\par 
(1) $n =0$, $l \in  {\Bbb{Z}}^{\geq 0}$

      \begin{equation} 
W=  \bar{Q}_{0}^{2}  \alpha \bar{Q}_{0}
          \end{equation}  

(2) $n \in  {\Bbb{Z}}^{\geq 1}$, $l \in  {\Bbb{Z}}^{\geq 0}$
      \begin{equation} 
W=  \bar{Q}_{n}^{2} \bar\alpha^{n} \left( \prod^{n-1} _{p=0}( \alpha {Q}_{p } \bar\alpha^{n-p -1})^{3}\right) \alpha  \bar{Q}_{n}
          \end{equation}  

 The possible words are $W^{c}$, $c \in  {\Bbb{Z}}^{\geq 1}$ and its prefixes.
      \bigskip\par 

(c) $\nu= 3n+4$:  for $l_{k}=3k+b, b\in \{ 0, 1, 2 \}$, the  words  are  $P_{n+1,0} \bar{Q}_{n} \prod _{k \geq 0} W_{3k}W_{3k+1}W_{3k+2}$, and its prefixes. The cycle occurs from $l_{k}$ to $l_{k+1}$ which corresponds to  $W=W_{3k}W_{3k+1}W_{3k+2}$.
      \bigskip\par 
(1) $n =0$, $l \in  {\Bbb{Z}}^{\geq 0}$

   \begin{equation} 
  W_{3k}=\alpha\bar{\beta}\bar{\alpha}\bar{\beta}^2, W_{3k+1}=\alpha\beta\alpha\bar{\beta}^2, W_{3k+2}=\bar{\alpha}\bar{\beta}\alpha\bar{\beta}\beta\alpha\bar{\beta}\alpha\beta\bar{\beta}
           \end{equation} 

(2) $n \in  {\Bbb{Z}}^{\geq 1}$, $l \in  {\Bbb{Z}}^{\geq 0}$
    \begin{equation} 
  W_{3k}=\left(\prod^{n-1} _{p=0} \bar{\alpha}P_{n,p}P_{n+1,p+1}P_{n,p}\right)\alpha\bar{Q}_{n}\bar{Q}_{n+1} \bar{Q}_{n}  
           \end{equation} 
  \begin{equation} 
W_{3k+1}=\left(\prod^{n-1} _{p=0} \bar{\alpha}P_{n,p}P_{n+1,p}P_{n,p}\right)\alpha Q_{n}  \alpha  \bar{Q}_{n}^{2}
         \end{equation}  
  
      \begin{equation} 
    W_{3k+2}=\bar{\alpha}  \bar{Q}_{n} P_{n+1,0}P_{n,0}\left(\prod^{n-2} _{p=0} \bar{\alpha}P_{n,p}P_{n+1,p+1}P_{n,p+1}\right)\bar{\alpha} \alpha Q_{n-1} \alpha 
    \bar{Q}_{n}Q_{n}\alpha\bar{Q}_{n}\bar{\alpha}^{n} \alpha \beta \bar{Q}_{n}.
             \end{equation}  
 
  \end{proof}
  
 We are studying polynomials having $N_{1}$ as big as possible. In the interpolation process, giving degree $d+3i$ polynomials, $N_{1}$ increases for some $i$ in comparison to its value for the degree $d$ polynomial, which is $N_{1}=l$. The transformation $\alpha$ increases  $N_{1}$ in one, $\beta$ decreases  $N_{1}$ in one, whereas $\bar{\alpha}, \bar{\beta}$ do not change its value. If we denote by $|W|_{x}$ the number of times the word $W$ contains the transformation $x$, then, after applying the transformations associated  to $W$, $N_{1}=s=l+ |W|_{\alpha}-|W|_{\beta}$. The sets $S^{(u)}_{l, r}, u\in \{1, 2\}$ are defined in such a way that $|W|_{\alpha}-|W|_{\beta}=u$ if $i \in S^{(u)}_{l,r}$ and  $|W|_{\alpha}-|W|_{\beta}=0$ if 
    $i \notin S^{(u)}_{l,r}$.  The analysis of the word sequences presented in Eqs. (2.7)-(2.13) gives

      \bigskip\par\noindent
      (a) $\nu=3n+2$, $n \in  {\Bbb{Z}}^{\geq 1}$, $l \in  {\Bbb{Z}}^{\geq 0}$, $r \in \{3p-1, 3p, 3p+1\}$, $p=1, \dots, n-1$: $S^{(1)}_{l, r}=\bigcup_{n-p+1 \leq j \leq n} \{j\}$
      \bigskip\par\noindent
      (b) $\nu=3n+3$ 
      
      (1) $n=0$, $l \in  {\Bbb{Z}}^{\geq 0}$, $r=3$:  $S^{(1)}_{l, r}=\{1\}$,              
        
          (2) $n \in  {\Bbb{Z}}^{\geq 1}$, $l \in  {\Bbb{Z}}^{\geq 0}$, $r \in \{3p+3, 3p+4, 3p+5\}$, $p=0, \dots, n-1$: $S^{(1)}_{l, r}=\bigcup_{1 \leq j \leq p+1} \{j\}$,               
          
             (3) $n \in  {\Bbb{Z}}^{\geq 1}$, $l \in  {\Bbb{Z}}^{\geq 0}$, $r=3n+3$: $S^{(1)}_{l, r}=\bigcup_{1 \leq j \leq n+1} \{j\}$              
          
                \bigskip\par\noindent
      (c) $\nu=3n+4$
      
      (1) $n=0$
      
      (1.1) $l=0, r=-1$: $S^{(1)}_{l, r}=\{1\}$, 
      
      (1.2) $l=3k$, $k \in  {\Bbb{Z}}^{\geq 0}$, $r=0$: $S^{(1)}_{l, r}=\{1, 2, 3, 4\}$, 
      
      (1.3) $l=3k+1$, $k \in  {\Bbb{Z}}^{\geq 0}$, $r=0$:  $S^{(1)}_{l, r}=\{1, 3, 4\}$,
      
     (1.4) $l=3k+2$, $k \in  {\Bbb{Z}}^{\geq 0}$, $r=-1$:  $S^{(1)}_{l, r}=\{3, 4\}$,
     
       (1.5) $l=3k+2$, $k \in  {\Bbb{Z}}^{\geq 0}$, $r=0$: $S^{(1)}_{l, r}=\{1, 2\}$ and $S^{(2)}_{l, r}=\{3\}$.
       
          (2) $n \in  {\Bbb{Z}}^{\geq 1}$  
          
             (2.1) $l=0, r=-1$: $S^{(1)}_{l, r}=\{n+1\}$,
          
              (2.2) $l=3k$, $k \in  {\Bbb{Z}}^{\geq 0}$, $r=0, \dots, n-1$: $S^{(1)}_{l, r}=A_{0}\bigcup B_{0}\bigcup C_{0}$, with  $A_{0}=\bigcup_{0 \leq j \leq r}  \{n-r+j+1\}$, $B_{0}=\bigcup_{0 \leq j \leq r+1}  \{2n-r+j+2\}$, $C_{0}=\bigcup_{0 \leq j \leq r}  \{3n-r+j+4\}$, and $S^{(1)}_{l, n}=\bigcup_{1 \leq j \leq 3n+4} \{j\}$,
              
              (2.3) $l=3k+1$, $k \in  {\Bbb{Z}}^{\geq 0}$, $r=0, \dots, n-1$: $S^{(1)}_{l, r}=A_{1}\bigcup B_{1}\bigcup C_{1}$, with  $A_{1}=\bigcup_{0 \leq j \leq r}  \{n-r+j+1\}$, $B_{1}=\bigcup_{0 \leq j \leq r}  \{2n-r+j+3\}$, $C_{1}=\bigcup_{0 \leq j \leq r}  \{3n-r+j+4\}$, and $S^{(1)}_{l, n}=\bigcup_{1 \leq j \leq 3n+4} \{j\} \setminus \{n+2\}$,
              
                (2.4) $l=3k+2$, $k \in  {\Bbb{Z}}^{\geq 0}$, $r=-1$:  $S^{(1)}_{l, r} = \{2n+3, 3n+4\}$,  
                
                (2.5) $l=3k+2$, $k \in  {\Bbb{Z}}^{\geq 0}$, $r=0, \dots, n-2$: $S^{(1)}_{l, r}=A_{2}\bigcup B_{2}\bigcup C_{2}$, with $A_{2}=\bigcup_{0 \leq j \leq r} \{n-r+j+1\}$, $B_{2}=\bigcup_{0 \leq j \leq r + 1} \{2 n-r+j + 2\}$, $C_{2}=\bigcup_{0 \leq j \leq r+1} \{3 n-r+j + 3\}$, 
                
                (2.6) $l=3k+2$, $k \in  {\Bbb{Z}}^{\geq 0}$, $r=n-1$:  $S^{(1)}_{l, n}=\bigcup_{2 \leq j \leq 3n+4} \{j\} \setminus \{n+2\}$,
                
                (2.7) $l=3k+2$, $k \in  {\Bbb{Z}}^{\geq 0}$, $r=n$:  $S^{(1)}_{l, r} = \{1, \dots, 2n+2\}$ and  $S^{(2)}_{l, r} =\{2n+3\}$.
 
        \bigskip\par  

The same method can be applied to get polynomials with all degrees satisfying condition $(E)$ by interpolating between two consecutive degrees $d$ and $d+3$ in Lemma 2.6.  The new words can be obtained by replacements of type $x \mapsto uvw$, $x,u,v,w \in \{\alpha, \beta, \bar{\alpha}, \bar{\beta} \}$, in the words given by Eqs.(2.7)-(2.13). For instance the word within a cycle for  $\nu=3n+2, n=2$ in Eq. (2.7) is  $W=W_{1}W_{2}W_{3}W_{4}$ with $W_{1}= (\bar{\alpha}^2\bar{\beta})^2$,  $W_{2}= (\bar{\alpha}\alpha\beta)^3$, $W_{3}= (\alpha\bar{\alpha}\beta)^2$, $W_{4}= \alpha\bar{\alpha}\bar{\beta}$. The replacements for $\bar{\alpha}$ and $\beta$ are always $\bar{\alpha} \mapsto \bar{\alpha}^3$, $\beta \mapsto \bar{\alpha}^2\beta$. The letters $\alpha, \bar{\beta}$ are replaced in the following ordered sequence as they appear from left to right in $W$: $\alpha \mapsto \bar{\alpha}^2\alpha$, $\bar{\alpha}\alpha\bar{\alpha},$ $\alpha\bar{\alpha}^2$,  $\bar{\beta} \mapsto \bar{\alpha}\alpha\beta$, $\alpha\bar{\alpha}\beta$, $\bar{\alpha}^2\bar{\beta}$ (the sequence of replacements is applied twice for $\alpha$). The word corresponding to all degrees in this case is $\bar{W}=\bar{W_{1}}\bar{W_{2}}\bar{W_{3}}\bar{W_{4}}$  with    

$$\bar{W_{1}}= \bar{\alpha}^7\alpha\beta\bar{\alpha}^6\alpha\bar{\alpha}\beta,  \bar{W_{2}}= \bar{\alpha}^5\alpha\bar{\alpha}^2\beta\bar{\alpha}^4\alpha\bar{\alpha}^3\beta\bar{\alpha}^3\alpha\bar{\alpha}^4\beta, \bar{W_{3}}= \bar{\alpha}^2\alpha \bar{\alpha}^5\beta\bar{\alpha}\alpha\bar{\alpha}^6\beta, 
\bar{W_{4}}= \alpha\bar{\alpha}^7\bar{\beta}$$
               
 \section{The singular surfaces}
 \bigskip\par
We denote the number of singularities of type $A_{\nu}$ in a surface $\mathcal{S}$  by $\mathcal{N}(\mathcal{S},A_{\nu})$  and the maximal number of $A_{\nu}$ complex singularities for a degree $d$ surface by $\mu_{A_{\nu}}(d)$. The affine equations of the surfaces considered in this section are obtained with the polynomials  ${\mathcal{G}}^{(t)}$ discussed in Lemmas 2.3-2.6 and polynomials related to the family 
    \begin{equation} 
\hat{J}_{d,\tau}(x, y):=\lambda_{d, \tau}  \prod_{\mu} L_{d, \tau, \mu}\left(x, y\right)
   \end{equation} 
   where  $L_{d,\tau,\mu}(x,y):=y+({\rm cos}2\varphi-x){\rm tan}\varphi+{\rm sin}2\varphi=0, \varphi=\frac{(6\mu-1)\pi }{6d}-\frac{\tau}{d}, \mu=- \lfloor \frac{d-2}{2} \rfloor,- \lfloor \frac{d-2}{2} \rfloor+1,\dots , \lfloor \frac{d+1}{2} \rfloor$ with $(x,y)\in {\Bbb{R}}^2$ and $\tau \in \Bbb{R}$. The parameters are $\lambda_{d,\tau} =(-1)^{m}2 d$ if $\tau = (6m-3d-1)\frac{\pi}{6}$ (in this case the line  $L_{d,\tau,\mu}(x,y)=0$ parallel to the $y$-axis is interpreted as the line $x+1=0$) and  $\lambda_{d,\tau}  = 2{\rm cos}(\tau+\frac{d\pi}{2}+\frac{2\pi}{3})$ if $\tau \neq (6m-3d-1)\frac{\pi}{6}$, $m \in \Bbb{Z}$.
  \par
The following result characterises the critical points and critical values of  $\hat{J}_{d,0}(x, y)$. They have multiplicity one  (\cite{esc16}, Lemma 1).

    \begin{lem}  The real polynomial $\hat{J}_{d,0}(x, y)$ has ${d \choose 2}$ critical points with critical value $0$. The number of points with critical value $8$ is $\frac{d(d-3)}{6}$ if $d=0$ mod 3, and $\frac{(d-1)(d-2)}{6}$ otherwise. The number of critical points with critical value $-1$ is $\frac{d^{2}}{3}-d+1$ for $d=0$ mod 3, and $\frac{(d-1)(d-2)}{3}$ otherwise. 
     \end{lem}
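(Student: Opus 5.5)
The plan is to combine a topological count of critical points of a product of real lines with an explicit trigonometric parametrization of $\hat{J}_{d,0}$ coming from the deltoid. The lines $L_{d,0,\mu}$ are real tangent lines to the Steiner deltoid $x+iy=2e^{i\varphi}+e^{-2i\varphi}$ (up to the normalization used in the definition). First I would check that the $d$ lines are in general position: no two are parallel and no three are concurrent. This should follow from the fact that three tangents to the deltoid meet in a point only under the linear constraint $\varphi_{1}+\varphi_{2}+\varphi_{3}\equiv 0 \pmod{\pi}$ on the parameters, which the chosen angles $\varphi=\frac{(6\mu-1)\pi}{6d}$ avoid.

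Given general position, the ${d \choose 2}$ intersection points are nondegenerate critical points (local normal form $xy$) with critical value $0$. Next I would count the others. The arrangement has $\frac{(d-1)(d-2)}{2}$ bounded faces, and on each of them $|\hat{J}_{d,0}|$ vanishes on the boundary, so it attains an interior extremum, which is a critical point. Bezout applied to $\partial_{x}\hat{J}=\partial_{y}\hat{J}=0$ bounds the number of isolated critical points by $(d-1)^{2}={d \choose 2}+\frac{(d-1)(d-2)}{2}$. Hence there are no other critical points, each bounded face contains exactly one, and all of them have multiplicity one.

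It remains to compute the critical values in the faces, and this is the main obstacle. I would use the folding identity: with $u=e^{i\theta_{1}}, v=e^{i\theta_{2}}, w=e^{i\theta_{3}}$ and $\theta_{1}+\theta_{2}+\theta_{3}=0$, the map $(\theta_{1},\theta_{2})\mapsto u+v+w$ sends the torus onto the closed deltoid region. Pulled back, the product of tangent lines should factor as $\lambda\prod_{k}\sin(\cdot)$ of the $\theta_{k}$. After the normalization $\lambda_{d,0}$, this gives
\begin{equation*}
\hat{J}_{d,0}=c_{1}\,\mathrm{Re}\big(e^{id\theta_{1}}+e^{id\theta_{2}}+e^{id\theta_{3}}\big)+c_{0},
\end{equation*}
with real constants $c_{0},c_{1}$, so that $\hat{J}_{d,0}$ is a generalized Chebyshev polynomial of type $A_{2}$ evaluated at $d$-fold angles. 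I would verify this by comparing zero sets and leading coefficients, which is where the specific phase $-\frac{\pi}{6d}$ and the constant $\lambda_{d,0}$ enter.

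The critical points of the right-hand side in the open alcove are the points where $d\theta_{k}\in\frac{2\pi}{3}\mathbb{Z}$ compatibly, i.e.\ the interior points of the triangular lattice of the alcove scaled by $d$. There are $\frac{(d-1)(d-2)}{2}$ of them, consistent with the face count. There $e^{id\theta_{1}}, e^{id\theta_{2}}, e^{id\theta_{3}}$ are either all equal to the same cube root of unity or the three distinct cube roots. After the affine normalization these two cases should give the values $8$ and $-1$, the first when the cube root equals $1$ and the second when the roots are distinct.

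The final step is lattice counting by residue class modulo $3$. Write the interior lattice points as $(a,b,c)$ with $a,b,c\geq 1$ and $a+b+c=d$, with the value determined by $a\equiv b\equiv c \pmod 3$ or not. If $3\nmid d$, the class with all coordinates congruent is a cyclically permuted third, giving $\frac{(d-1)(d-2)}{6}$ points of value $8$ and $\frac{(d-1)(d-2)}{3}$ of value $-1$. If $3\mid d$, the point $a=b=c=d/3$ breaks the symmetry, giving $\frac{d(d-3)}{6}$ and $\frac{d^{2}}{3}-d+1$; these sum to $\frac{(d-1)(d-2)}{2}$. The delicate part is the identification of which residue class goes with the value $8$ in each case, and I would fix it by checking $d=3,4$ explicitly.
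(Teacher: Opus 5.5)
The paper does not prove this lemma (it quotes Lemma 1 of \cite{esc16}), so I assess your argument on its own. The first half is sound: general position via $\varphi_1+\varphi_2+\varphi_3\not\equiv 0 \pmod{\pi}$, the ${d \choose 2}$ nondegenerate nodes, one extremum in each bounded face, and the B\'ezout bound $(d-1)^2$. The bound is legitimate because the top-degree form is a product of $d$ distinct linear forms, so the two partials have no common zero at infinity.

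The gap is in the second half, which is where the content of the lemma lies.

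\textbf{The identity has the wrong phase.} The lines are the deltoid tangents at $\psi=-2\varphi$, and the chosen angles are exactly the solutions of $e^{id\psi}=e^{i\pi/3}$. Put $X_k=e^{id\theta_k}$, $S=X_1+X_2+X_3$, $\omega=e^{2\pi i/3}$, and use $X_1X_2X_3=1$. Then $\prod_k (X_k-e^{i\pi/3})=2+2\,\mathrm{Re}(\omega S)$, and this is the pull-back of $\hat J_{d,0}$: both equal $-1$ at the origin for $d=3$ and $2$ for $d=4$. Your $c_1\mathrm{Re}(S)+c_0$ vanishes on a deltoid tangent only if $c_0=c_1$. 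It is then the product of the tangents with $e^{id\psi}=-1$, a different arrangement. For $d\not\equiv 0\pmod 3$ a rotation by $2\pi/3$ repairs this. For $3\mid d$ nothing does, since $z\mapsto\omega z$ multiplies $S$ by $\omega^{d}=1$.

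\textbf{The critical points are misidentified.} $\nabla_\theta\,\mathrm{Re}(\omega S)=0$ means that $\sin(\frac{2\pi}{3}+d\theta_k)$ is independent of $k$. In the open alcove the solutions are of two kinds. The nodes are the points where $\omega X_k$ equals $1,-1,-1$ in some order. The remaining ones are the points with $X_1=X_2=X_3=\zeta$, $\zeta^3=1$. Points where the $X_k$ are the three distinct cube roots are not critical, since the sines there are $0$ and $\pm\sqrt{3}/2$. So every face critical point carries a common value $\zeta$, and its critical value is $2+6\,\mathrm{Re}(\omega\zeta)$. This is $8$ for $\zeta=\omega^{2}$ and $-1$ for $\zeta\in\{1,\omega\}$. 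The split is three-way by $\zeta$, not ``all equal versus distinct''.

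\textbf{The final count does not follow from your criterion.} With $a+b+c=d$, the class $a\equiv b\equiv c\pmod 3$ is empty when $d\not\equiv 0\pmod 3$. When $3\mid d$ it has $\frac{d^2-3d+6}{6}$ elements. For $d=6$ your rule therefore splits the ten face points as $4+6$ whichever class you assign to $8$, while the lemma requires $3+7$. Checking $d=3,4$ cannot repair a two-way rule.

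\textbf{The correct bookkeeping.} Write the alcove point as $\theta_1\ge\theta_2\ge\theta_3\ge\theta_1-2\pi$, $\sum\theta_k=0$, with $\theta_1-\theta_2=2\pi a/d$, $\theta_2-\theta_3=2\pi b/d$ and $a,b,c\ge 1$. Then $\zeta=\omega^{b-a}$.
\begin{itemize}
\item If $d\not\equiv 0\pmod 3$, the cyclic shift $(a,b,c)\mapsto(b,c,a)$ adds $d$ to $b-a$ modulo $3$. Hence the three classes each have $\frac{(d-1)(d-2)}{6}$ points, one class with value $8$ and two with value $-1$.
\item If $3\mid d$, the class $b\equiv a$ (equivalently $a\equiv b\equiv c$, containing $(d/3,d/3,d/3)$) has $\frac{d^2-3d+6}{6}$ points, all with value $-1$. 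The classes $b-a\equiv 1,2$ are exchanged by $(a,b,c)\mapsto(b,a,c)$. They have $\frac{d(d-3)}{6}$ points each, with values $-1$ and $8$ respectively.
\end{itemize}
This gives $\frac{d(d-3)}{6}$ points with value $8$ and $\frac{d^2-3d+6}{6}+\frac{d(d-3)}{6}=\frac{d^2}{3}-d+1$ with value $-1$. Without the phase $e^{i\pi/3}$, the large class would carry the value $8$. That is exactly the discrepancy your last step writes over rather than derives.
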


  \par
  In (\cite{esc18}, Lemma 4.2) we have shown that
     \begin{equation} 
      {\mathcal{J}}_{d}(x, y):= \hat{J}_{d, 0}\left(x, \frac{y}{ \sqrt{3}}\right)
           \end{equation}      
are defined over ${\Bbb{Q}}$ and in (\cite{esc18}, Theorem 4.3) the following. 

\begin{thm}
 \par
1. For $d=3m+r, r \in\{0,1,2\}, m \in {\Bbb{Z}}^{+}$, the projective surfaces defined over ${\Bbb{Q}}$ with affine equations 

     \begin{equation} 
      {\mathcal{J}}_{d}(x, y)+\frac{1}{4}(3-{\mathcal{J}}_{d}(2z+1, 0))=0 
       \end{equation}     
         have 
            \begin{equation}  
        {d \choose 2} \Big  \lfloor \frac{d}{2} \Big  \rfloor+\left(\frac{d^{2}-\lceil\frac{r}{2}\rceil}{3}-d+1\right) \Big \lfloor \frac{d-1}{2} \Big \rfloor
       \end{equation}         
        real nodes and no other singularities. 
        \par
2. The number of real nodes of the projective threefolds defined over ${\Bbb{Q}}$ with affine equations 
   \begin{equation} 
   {\mathcal{J}}_{d}(x, y)-{\mathcal{J}}_{d}(z, w)=0
          \end{equation}     
    is
            \begin{equation}  
{d \choose 2}^2+\left(\frac{d^{2}-3d+2\lceil\frac{r}{2}\rceil}{6}\right)^2+\left(\frac{d^{2}-\lceil\frac{r}{2}\rceil}{3}-d+1\right)^2
       \end{equation}   
and the threefolds have no other singularities.
\end{thm}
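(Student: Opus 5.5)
The plan is to treat both parts with the same mechanism. The singularities of a hypersurface whose affine equation splits as a sum of polynomials in disjoint sets of variables are read off from the critical points of the summands, and then Lemma 2.7 (the critical-point count for $\hat{J}_{d,0}$) and its rescaled version ${\mathcal{J}}_d$ supply the counts. Write $F(x,y,z)={\mathcal{J}}_{d}(x,y)+g(z)$ with $g(z)=\frac{1}{4}(3-{\mathcal{J}}_{d}(2z+1,0))$. Then $\nabla F=0$ forces $(x,y)$ to be a critical point of ${\mathcal{J}}_d$ and $z$ a critical point of $g$, and $F=0$ forces the critical values to cancel, i.e. ${\mathcal{J}}_d(x,y)=-g(z)$.

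\textbf{Step 1: the univariate factor.} Identify the restriction $u\mapsto {\mathcal{J}}_{d}(u,0)$. On the line $y=0$ the lines $L_{d,0,\mu}$ meet the $x$-axis at points related to $\cos 2\varphi$. From the product structure and the normalisation $\lambda_{d,0}$, one expects ${\mathcal{J}}_d(u,0)$ to be, up to an affine change, a Chebyshev polynomial; concretely, $3-{\mathcal{J}}_d(2z+1,0)$ should be $4$ times a shifted Chebyshev-type polynomial. I would verify this directly by comparing roots and the leading coefficient. The conclusion of this step is that $g$ has exactly two critical values, taken at simple critical points: one value attained $\lfloor d/2\rfloor$ times and the other $\lfloor (d-1)/2\rfloor$ times. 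These values must be matched with the negatives of the critical values $0$ and $-1$ (and possibly $8$) of ${\mathcal{J}}_d$. Since $g$ has exactly two critical values, the third critical value of ${\mathcal{J}}_d$ yields no singular points. This is the step I expect to be the main obstacle: pinning down the constants so that precisely $0$ and $-1$ are matched, i.e. that $g$ takes the critical values $0$ and $1$.

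\textbf{Step 2: counting and type.} Each singular point pairs a critical point of ${\mathcal{J}}_d$ with critical value $c$ and a critical point of $g$ with critical value $-c$. All these critical points are nondegenerate: those of ${\mathcal{J}}_d$ by Lemma 2.7 and \cite{esc16}, those of $g$ by the Chebyshev structure. So the Hessian of $F$ is nondegenerate, each singular point is an $A_1$ node, and it is real because all the critical points involved are real. By Lemma 2.7, the value $0$ contributes ${d\choose 2}\lfloor d/2\rfloor$ nodes. The value $-1$ has $\frac{d^2}{3}-d+1$ critical points when $r=0$ and $\frac{(d-1)(d-2)}{3}=\frac{d^2-1}{3}-d+1$ otherwise, which is exactly $\frac{d^2-\lceil r/2\rceil}{3}-d+1$ since $\lceil r/2\rceil\in\{0,1\}$. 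Multiplying by $\lfloor (d-1)/2\rfloor$ gives the stated total.

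\textbf{Step 3: points at infinity, rationality, and the threefold.} The top-degree part of $F$ is the product of $d$ distinct linear forms in $x,y$ plus $c z^d$ with $c\neq 0$, so $F$ has no singular points at infinity. Rationality follows from (\cite{esc18}, Lemma 4.2). For part 2, $G={\mathcal{J}}_d(x,y)-{\mathcal{J}}_d(z,w)$ is singular exactly at pairs of critical points with equal critical values, and all such points are nondegenerate. Summing the squares of the three counts in Lemma 2.7 gives the result. The identity $\frac{d(d-3)}{6}$ or $\frac{(d-1)(d-2)}{6}$ $=\frac{d^2-3d+2\lceil r/2\rceil}{6}$ is immediate. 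The leading form, a difference of two products of $d$ distinct linear forms in disjoint variables, is a smooth-cone-type form, so again there are no singularities at infinity.
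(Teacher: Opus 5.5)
Your proposal is correct and follows the same mechanism the paper relies on (the statement is quoted from the cited reference, and the proof of Theorem 3.3 uses the identical splitting, with the nodal case corresponding to $\mathcal{G}=T_d$): indeed ${\mathcal{J}}_d(x,0)=1-2T_d\left(\frac{x-1}{2}\right)$, so $g(z)=\frac{1}{2}(1+T_d(z))$ has $\lfloor d/2\rfloor$ simple critical points with value $0$ and $\lfloor (d-1)/2\rfloor$ with value $1$, which is exactly the assignment your count uses. The step you flag closes quickly: the $x$-intercepts $1+2\cos 2\varphi_\mu$ satisfy $\cos(2d\varphi_\mu)=\frac{1}{2}$, and the leading coefficient $\lambda_{d,0}\prod_\mu(-\tan\varphi_\mu)$ equals $-1$ by $\prod_{k=0}^{d-1}\tan\left(\theta+\frac{k\pi}{d}\right)=\sin(d\theta)/\sin\left(d\theta+\frac{d\pi}{2}\right)$ at $\theta=-\frac{\pi}{6d}$; for ``no other singularities'' you should also note that the counts in the critical-point lemma add up to the B\'ezout bound $(d-1)^2$, so ${\mathcal{J}}_d$ has no further complex critical points.
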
        
 \bigskip\par
If $\mathcal{N}$ is the number of nodes given by Eq. (3.4)  then 
            \begin{equation} 
\mu_{A_{1}}(d) \geq \mathcal{N}. 
      \end{equation}   
Also Eq. (3.6) gives a lower bound for nodal threefolds. We now study surfaces whose affine equations are, as in Eq. (3.3), of the form ${\mathcal{J}}+{\mathcal{P}}=0$, where ${\mathcal{P}}$ has critical values $0,1$. We use the polynomial ${\mathcal{U}}^{(t)}_{d,\nu,\epsilon}(w)=\frac{1}{2}({\mathcal{G}}^{(t)}_{d,\nu,\epsilon}(w)+1)$.
 \bigskip\par
      \begin{thm} 
The degree $d=3q$ surfaces $\mathcal{S_{J}}$ with affine equations  
           \begin{equation} 
{\mathcal{J}}_{d}(u, v)+{\mathcal{U}}^{(t)}_{d,\nu,\epsilon}(w)=0,
      \end{equation} 
 where $(u, v, w) \in {\Bbb{C}^3}$,  have 
 \par       
 \begin{equation} 
\mathcal{N}(\mathcal{S_{J}}, A_{\nu})= \frac{d(d-1)}{2}  \Big \lfloor \frac{d}{\nu+1} \Big \rfloor+\left(\frac{d(d-3)}{3}+1\right)\left(\Big \lfloor \frac{d-1}{\nu}   \Big \rfloor-    \Big \lfloor \frac{d}{\nu+1}   \Big \rfloor\right),
 \end{equation}  
 for $d>\nu\geq 2$.

 \end{thm}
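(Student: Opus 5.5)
The surface $\mathcal{S_J}$ is a sum of a function of $(u,v)$ and a function of $w$. Its affine singular points are therefore exactly the points $(u_0,v_0,w_0)$ satisfying three conditions:
\begin{itemize}
\item $(u_0,v_0)$ is a critical point of ${\mathcal{J}}_d$;
\item $w_0$ is a critical point of ${\mathcal{U}}^{(t)}_{d,\nu,\epsilon}$;
\item ${\mathcal{J}}_d(u_0,v_0)+{\mathcal{U}}^{(t)}(w_0)=0$.
\end{itemize}
So I would first pin down the critical values of the two summands and then match them.

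By Lemma 3.1, together with the rescaling $y\mapsto y/\sqrt{3}$ in Eq. (3.2), ${\mathcal{J}}_d$ has only nondegenerate (Morse) critical points. For $d=3q$ their critical values are:
\begin{itemize}
\item $0$, at ${d\choose 2}$ points;
\item $8$, at $\frac{d(d-3)}{6}$ points;
\item $-1$, at $\frac{d^2}{3}-d+1=\frac{d(d-3)}{3}+1$ points.
\end{itemize}
The polynomial ${\mathcal{U}}^{(t)}=\frac12({\mathcal{G}}^{(t)}+1)$ has only the critical values $0$ and $1$. These come from the critical values $-1$ and $1$ of ${\mathcal{G}}^{(t)}$, with the same multiplicities. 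Hence the matches occur only in two ways. A critical point of ${\mathcal{J}}_d$ with value $0$ pairs with a critical point of ${\mathcal{U}}$ with value $0$, i.e. with a black vertex of the tree. A critical point of ${\mathcal{J}}_d$ with value $-1$ pairs with a critical point of ${\mathcal{U}}$ with value $1$, i.e. with a white vertex. The value $8$ never matches, since $-8$ is not a critical value of ${\mathcal{U}}$.

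Next I would identify the local type. Near a matched point, in suitable local coordinates, the equation reads $x^2+y^2+w^{k+1}=0$, where $k$ is the multiplicity of $w_0$ as a critical point of ${\mathcal{G}}^{(t)}$, i.e. the vertex degree minus one. This is an $A_k$ singularity. So the $A_\nu$ points are in bijection with pairs made of a critical point of ${\mathcal{J}}_d$ and a critical point of ${\mathcal{G}}^{(t)}$ of multiplicity exactly $\nu$ with the matching value. This gives
$$\mathcal{N}(\mathcal{S_J},A_\nu)={d\choose 2}N_{-1}({\mathcal{G}},\nu)+\left(\frac{d(d-3)}{3}+1\right)N_1({\mathcal{G}},\nu).$$

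Then I would invoke condition $(E)$, which holds for every polynomial ${\mathcal{G}}^{(t)}$ of degree $d$ produced in Lemmas 2.3--2.6. It states that $N_{-1}=\lfloor\frac{d}{\nu+1}\rfloor$ and $N_1=\lfloor\frac{d-1}{\nu}\rfloor-\lfloor\frac{d}{\nu+1}\rfloor$. Substituting these gives exactly the stated formula.

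The remaining, and main, obstacle is ruling out singularities at infinity. I expect this to be the hardest step. The degree $d$ part of the equation is the product of the $d$ distinct linear forms from Eq. (3.1), all in $(u,v)$, plus $c\,w^d$ with $c$ the leading coefficient of ${\mathcal{U}}$, which is nonzero. Write the homogenization as $F=H_d(u,v)+c\,w^d+t\,(\dots)$. On $t=0$ we have $\partial_wF=dc\,w^{d-1}$ and $\partial_{u},\partial_v$ of $H_d$. The forms $H_d$ and $c\,w^d$ both have isolated zeros at the origin, so these partials have no common zero with $(u,v,w)\neq 0$. Hence the points at infinity are smooth, and the count above is the total count of $A_\nu$ points for $d>\nu\ge 2$.
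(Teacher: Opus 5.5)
Your proposal is correct and follows the paper's proof. Both rest on the count $\mathcal{N}(\mathcal{S_{J}},A_{\nu})=N_{0}(\mathcal{J},1)N_{-1}(\mathcal{G},\nu)+N_{-1}(\mathcal{J},1)N_{1}(\mathcal{G},\nu)$, using the critical-point numbers of $\mathcal{J}_{d}$ for $d=3q$, and then substitute condition $(E)$ from Lemmas 2.3--2.6. You additionally justify that product formula (matching of critical values, local form $x^{2}+y^{2}+w^{k+1}$) and check that there are no singularities at infinity; the paper leaves both implicit.

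One wording fix in the last step: $H_{d}$ itself does not have an isolated zero, since it vanishes on $d$ lines through the origin. What you need, and what the distinctness of its linear factors gives, is that $\partial_{u}H_{d}$ and $\partial_{v}H_{d}$ vanish simultaneously only at $(u,v)=(0,0)$.
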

 
 \begin{proof}
The number of singularities of the surfaces described by Eq. (3.8) can be obtained with  
$$\mathcal{N}(\mathcal{S_{J}},A_{\nu})=N_{0}(\mathcal{J},1)N_{-1}(\mathcal{G}, \nu)+N_{-1}(\mathcal{J},1)N_{1}(\mathcal{G}, \nu).$$
For $d=3q$ the number of critical points of ${\mathcal{J}}_{d}$ is $N_{0}(\mathcal{J},1)=\frac{d(d-1)}{2}, N_{8}(\mathcal{J},1)=\frac{d(d-3)}{6}, N_{-1}(\mathcal{J},1)=\frac{d(d-3)}{3}+1$ \cite{esc26}. By taking into account the results of Lemmas 2.3 and 2.4 we see  that $\mathcal{N}(\mathcal{S_{J}},A_{\nu})=\frac{d(d-1)}{2} \Big \lfloor \frac{d}{\nu+1} \Big \rfloor+\frac{d(d-3)}{3}+1$, when  $t=1,2,3, \nu > 2$, because the polynomials satisfy condition $(E)$ with $N_{1}(\mathcal{G}, \nu)=1$. For the case $t=1,n=0,m=1$ we have $N_{-1}(\mathcal{G}, 2)=3+h$ and $N_{1}(\mathcal{G}, 2)=1+\lfloor\frac{h}{2}\rfloor$, therefore
  $\mathcal{N}(\mathcal{S_{J}},A_{2})=\frac{d(d-1)(3+h)}{2}+(\frac{d(d-3)}{3}+1)(1+\lfloor\frac{h}{2}\rfloor)$ with $d=3(h+3)$.
    \par
For $t=4,5,6$ the polynomials satisfy condition $(E)$ with $N_{1}(\mathcal{G}, \nu)=s=\Big \lfloor \frac{d-1}{\nu}   \Big \rfloor-    \Big \lfloor \frac{d}{\nu+1}   \Big \rfloor  \in {\Bbb{Z}}^{\geq 1}$ (Lemmas 2.5 and 2.6), hence $\mathcal{N}(\mathcal{S_{J}}, A_{\nu})= \frac{d(d-1)}{2}  \Big \lfloor \frac{d}{\nu+1} \Big \rfloor+(\frac{d(d-3)}{3}+1)s$. The series corresponding to $t=4, \nu=2$ has degrees $d=3(2l+r+1)$ with $r=0,1, l\in {\Bbb{Z}}^{\geq 1}$ and the same number of singularities as the case $t=1,n=0,m=1$.
 
    \end{proof}
    \newtheorem*{remark}{Remark}
    \begin{remark}
    In general the surfaces have other singularity types. In Lemmas 1, 2 and 3 in \cite{esc26} it is shown that  $N_{1}(\mathcal{B}, \nu)=1$, and $N_{-1}(\mathcal{B}, \nu)=\Big \lfloor \frac{d_{0}}{\nu+1} \Big \rfloor$, $N_{-1}(\mathcal{B}, \epsilon)=1$, $N_{1}(\mathcal{B}, \epsilon)=0$ \cite{esc26}, hence  $\mathcal{N}(\mathcal{S_{J}},A_{\epsilon})=\frac{d_{0}(d_{0}-1)}{2}$ when $\epsilon \neq 0$. Other examples include ${\mathcal{G}}^{(t)}_{d,\nu,\epsilon}(w), t=1, 3$ where $N_{1}(\mathcal{G}, 1)=1$ if $h$ is odd, hence $\mathcal{N}(\mathcal{S_{J}},A_{1})=\frac{d(d-3)}{3}+1$ in that case.
        \end{remark}
 \bigskip\par

    \begin{remark}
The surfaces $\mathcal{S}_{F}$ studied in  \cite{lab06} have affine equations $F_{d}^{\bf{A}_{2}}(u, v)+M^{j}_{d}(w)=0$, where $F_{d}^{\bf{A}_{2}}(u,v)$ denotes the folding polynomial associated to the root lattice $\bf{A}_{2}$ obtained in \cite{wit88, hof88}, and $M^{j}_{d}(w)$ are degree $d$ Bely polynomials with $\lfloor \frac{d}{j+1}\rfloor$ critical points of multiplicity $j$ with critical value $-1$ and $\lfloor \frac{d-1}{j}\rfloor-\lfloor \frac{d}{j+1}\rfloor$ critical points with critical value $+1$. According to Theorem 3.3, the surfaces $\mathcal{S_{J}}$ have $N_{1}(\mathcal{G}, \nu)$ more singularities than $\mathcal{S}_{F}$, namely $1+\Big \lfloor\frac{h}{2}\Big \rfloor$ more cusps for $t=1$, one more singularity of type $A_{\nu}$ when $t=1,2,3, \nu>2$, and $s\in {\Bbb{Z}}^{\geq 1}$ more singularities of type $A_{\nu}$ when $t=4,5, 6, \nu \geq 2$. If we compare Eq.3.9  with Eq.3.4 for $r=0$ we see that Eq.3.9 is valid also for nodes and we can give the following bound for $d=3q$ and $\nu \geq 1$:
     \begin{equation}  
\mu_{A_{\nu}}(d) \ge \left( \frac{d(d+3)}{6}-1\right)  \Big \lfloor \frac{d}{\nu+1} \Big \rfloor+\left(\frac{d(d-3)}{3}+1\right)\Big \lfloor \frac{d-1}{\nu}   \Big \rfloor
     \end{equation} 
     As we have commented at the end of Section 2, the interpolation process can be extended to get polynomials satisfying condition $(E)$ for all $d$. For $d \neq 3q$ the surfaces, that we also denote by $\mathcal{S_{J}}$, with affine equations ${\mathcal{J}}_{d}(u, v)+{\mathcal{U}}^{(t)}_{d,\nu,\epsilon}(w)=0$ have  $\mathcal{N}(\mathcal{S_{J}}, A_{\nu})=\frac{d(d-1)}{2}  \Big \lfloor \frac{d}{\nu+1} \Big \rfloor+\frac{d(d-3)}{3}\left(\Big \lfloor \frac{d-1}{\nu}   \Big \rfloor-    \Big \lfloor \frac{d}{\nu+1}   \Big \rfloor\right)$ according to Lemma 3.1.
             \end{remark}
     \par
Some polynomials ${\mathcal{U}}^{(t)}_{d,\nu,\epsilon}(w)$ can be computed for low degree using Groebner basis \cite{ lab06, esc14a, esc14b}. In certain cases, like ${\mathcal{G}}^{(4)}_{d_{0}, \nu}(w)$, there are explicit expressions in terms of classical Jacobi polynomials that  can be obtained along the lines of \cite{esc26}.

\par

\end{document}